\documentclass{article}
\usepackage{amsmath}

\newtheorem{thm}{Theorem}[section]
\newtheorem{cor}[thm]{Corollary}
\newtheorem{prop}[thm]{Proposition}

\newtheorem{lem}[thm]{Lemma}
\newtheorem{Def}[thm]{Definition}
\newtheorem{rem}[thm]{Remark}

\newtheorem{ex}[thm]{Example}

\newcommand{\be}{\begin{equation}}
    \newcommand{\ee}{\end{equation}}
\newcommand{\ben}{\begin{enumerate}}
    \newcommand{\een}{\end{enumerate}}
\newcommand{\beq}{\begin{eqnarray}}
    \newcommand{\eeq}{\end{eqnarray}}
\newcommand{\beqn}{\begin{eqnarray*}}
    \newcommand{\eeqn}{\end{eqnarray*}}

\newcommand{\pa}{\partial}

\newcommand{\qed}{\hspace*{\fill}Q.E.D.}  

\begin{document}
    \title{Global Metrizability on Sprays of Scalar Curvature}
    \author{Xingzhi Duanmu  and  Guojun Yang\footnote{Corresponding Author } }
    \date{}
    \maketitle

    \begin{abstract}
  Sprays of scalar curvature constitute an important class of
  sprays, and such a class includes all two-dimensional sprays. In
  this paper, we consider the global  metrizability of certain sprays  on
  a manifold under some curvature  and topological
  conditions. We prove that, for a regular spray {\bf G} of scalar curvature
   with almost everywhere nonzero Ricci curvature on a manifold $M$ of dimension $n\ge 3$, {\bf G} is
  globally metrizable if and only if it is locally metrizable,
  provided that $M$ has trivial first de Rham cohomology group.
  Further, we characterize the global metizability of a class of
  two-dimensional singular Berwald sprays on a manifold with trivial first de Rham
  cohomology group. Meanwhile, on a cylinder $S^1\times {\bf R}$ (with non-trivial first de Rham
  cohomology group), we construct a family of
  singular Berwald sprays which are locally metrizable but not globally
   metrizable. Finally, we construct some examples of  two-dimensional
   sprays on  cylinders or spheres with special
   properties.

     {\bf Keywords:} Spray, Finsler Metric, Metrizability, Scalar Curvature, de
     Rham-C$\breve{e}$ch Cohomology Group

        {\bf MR(2000) Subject Classification: }
         53C60, 53B40

    \end{abstract}

    \section{Introduction}

   Spray geometry  studies the properties of  path spaces (consisting of geodesics),
and it is more general than Finsler geometry. A spray ${\bf G}$ on
a manifold $M$ is a family of compatible second order ODEs which
define a special vector field on a conical region $\mathcal{C}$ of
 $T^oM:=TM\setminus \{0\}$. When $\mathcal{C}=T^oM$, {\bf G} is called regular;
 when $\mathcal{C}_x\ne T^o_xM$ for any $x\in M$, {\bf G} is called
 singular.
Every Finsler metric induces a natural spray but there are many
sprays which cannot be induced by any Finsler metric (\cite{BM2,
EM, Shen6, Yang1, Yang2, Yang3}). So a natural issue is to find a
bridge connecting sprays and Finsler metrics, which is called the
metrizability problem.

The metrizability problem for a spray ${\bf G}$ seeks for a
Finsler metric whose spray is just {\bf G}. So a natural target is
to determine certain curvature conditions under which a given
spray is Finsler-metrizable or not.  Quite a few papers
concentrate on this study. One hand, some research jobs focus on
the conditions such that a given spray is not metrizable.
 In \cite{Mu}, Z. Muzsnay gives
some sprays which are not Finsler-metrizable under some conditions
satisfied by the holonomy distribution.  In \cite{EM}, S.G.
Elgendi and Z. Muzsnay discuss a more general class of sprays and
prove that they are not metrizabile by using the holonomy
distribution.   What's more important, on the other hand, is to
characterize certain given class of sprays to be metrizable.  In
\cite{BM2}, I. Bucataru and Z. Muzsnay give necessary and
sufficient conditions for sprays of scalar curvature with nonzero
Ricci curvature to be metrizable. In \cite{Yang2}, G. Yang uses
the metrizability condition for a spray of isotropic curvature to
give the local structure of projectively flat Berwald spray of
isotropic curvature, and in \cite{Yang3}, studies more
metrizability problems of some classes of sprays.

The jobs mentioned above on the metrizability of certain  sprays
essentially focus on the local construction, which is independent
of the topological structure of the concerned manifold on which a
spray is defined. A natural problem is, for a spray manifold
$(M,{\bf G})$, under what topological conditions on $M$ and
curvature conditions on {\bf G}, the spray {\bf G} is globally
metrizable if it is locally metrizable? In \cite{CMS}, M. Crampin,
T. Mestdag and D. J. Saunders show a global result on how to
construct a global Finsler metric projectively related to a spray
{\bf G} under certain curvature condition of {\bf G} and the
assumption of vanishing second C${\rm\breve{e}}$ch cohomology
group of the manifold on which {\bf G} is defined. Thus, for a
spray manifold $(M,{\bf G})$, where {\bf G} is locally metrizable
on $M$, the topological structure of $M$ should be closely related
to the global metrizability of {\bf G} on $M$.

In this paper, we focus on the  local and global metrizability of
sprays of scalar curvature and some special two-dimensional
Berwald sprays. For a spray manifold $(M,{\bf G})$, we always
assume that $M$ is connected and is of second countable axiom.

Sprays of scalar (resp. isotropic, constant)  curvature, which are
introduced in  \cite{Shen6} (resp. \cite{LS}, \cite{Yang2}), are
an important class of sprays. It is the natural generalization of
the class of Finsler metrics of scalar (resp. isotropic, constant)
flag curvature. A spray ${\bf G}$ is said to be of {\it scalar
curvature} if  its Riemann curvature $R^i_{\ k}$ satisfies
(\cite{Shen6}).
 \be\label{yg1}
  R^i_{\ k}=R\delta^i_k-\tau_ky^i,
  \ee
where $R=R(x,y)$ and $\tau_k=\tau_k(x,y)$ are some positively
homogeneous functions.
 Denote by $Ric$ the Ricci curvature of a spray. In (\ref{yg1}), we have $Ric=(n-1)R$ and
 $R=\tau_0 (=\tau_ry^r)$, where $n$ is the dimension of {\bf G}.
 We first have
the following theorem.

 \begin{thm}\label{Thm01}
  Let $(M, {\bf G})$ be a regular spray manifold with the dimension $n\ge 3$, where {\bf G}
   is of scalar curvature
   with  $Ric\ne 0$ almost everywhere on $T^oM$, and
   $M$ has trivial first de Rham cohomology group. Then {\bf G} is
  globally metrizable if and only if it is locally metrizable.
    \end{thm}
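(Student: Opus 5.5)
The "only if" direction is immediate: restricting a global Finsler metric metrizing ${\bf G}$ to any open set gives a local metric with spray ${\bf G}$. For the converse, I would use the known local theory of sprays of scalar curvature with nonzero Ricci curvature (Bucataru--Muzsnay \cite{BM2}). If a spray of scalar curvature is metrizable by $F$ on an open set $U$, then the curvature identity for Finsler metrics of scalar flag curvature gives $R = K F^2$, with $K$ the flag curvature. So wherever $Ric\ne 0$ we get $F^2 = R/K$. In dimension $n\ge 3$, the analogue of Schur-type arguments (as in \cite{BM2}) pins down the metrizing function up to a constant multiple. I would formalize this as a \textbf{uniqueness lemma}: on a connected open set $U\subset M$, if $F_1$ and $F_2$ both metrize ${\bf G}$ and $Ric\ne 0$ almost everywhere on $T^oU$, then $F_2 = c\,F_1$ for a positive constant $c$.

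To prove the lemma, I would write $F_2 = \varphi(x,y)F_1$ with $\varphi$ positively $0$-homogeneous. Both metrics have the same spray, so $\varphi$ is constant along the geodesic flow (the common spray preserves both $F_1$ and $F_2$). Both are of scalar flag curvature $K_1,K_2$, with $R=K_1F_1^2=K_2F_2^2$. The key step is to show that $\varphi$ is independent of $y$, i.e. that $F_2^2$ and $F_1^2$ are proportional fibrewise. This is where $n\ge 3$ and $Ric\ne0$ enter: the $\tau_k$-part of (\ref{yg1}) is determined by ${\bf G}$ and equals the quantity $\frac{1}{2}F^{-2}\cdots$ derived from $R/F^2$ via the formula $\tau_k = R\,\frac{(F^2)_{y^k}}{2F^2}+\frac{1}{3}F^2K_{y^k}$ (the standard formula for metrics of scalar flag curvature). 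Comparing the two metrics gives an equation forcing $d_y\log\varphi$ to be proportional to $d_y\log R$ on the set where $R\ne 0$. Combining this with the Schur lemma in dimension $\ge 3$ should give $\varphi=\varphi(x)$. Then $\varphi$ is constant along geodesics, and since geodesics through a point reach a neighbourhood, $\varphi$ is locally constant, hence constant on connected $U$. The a.e. hypothesis and continuity let me extend the conclusion from $\{Ric\ne0\}$ to all of $T^oU$. This lemma is the step I expect to be the main obstacle: the delicate part is extracting $y$-independence of $\varphi$ from the curvature data.

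Given the lemma, the globalization is a cohomology argument. I would cover $M$ by a locally finite family of connected (geodesically convex) open sets $U_\alpha$, chosen so that all nonempty intersections $U_\alpha\cap U_\beta$ are connected, each carrying a metrizing $F_\alpha$. On each overlap the lemma gives $F_\beta = c_{\alpha\beta}F_\alpha$ with $c_{\alpha\beta}>0$ constant. The numbers $f_{\alpha\beta}=\log c_{\alpha\beta}$ then form a Čech $1$-cocycle with values in the constant sheaf ${\bf R}$. By the de Rham--Čech isomorphism, $H^1(M;{\bf R})\cong H^1_{dR}(M)=0$, so the cocycle is a coboundary: $f_{\alpha\beta}=a_\beta-a_\alpha$ for constants $a_\alpha$. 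Setting $\tilde F_\alpha = e^{-a_\alpha}F_\alpha$, these rescaled metrics agree on every overlap and glue to a global Finsler metric $F$ on $M$. Each $\tilde F_\alpha$ is a constant multiple of a metric with spray ${\bf G}$, so it has the same spray, and hence $F$ metrizes ${\bf G}$ globally.
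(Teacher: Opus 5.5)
There is a genuine gap, and it sits exactly where the hypothesis $n\ge 3$ is needed.

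By Definition \ref{def1}(ii), local metrizability only gives metrics on conical neighbourhoods of points $(x,y)\in T^oM$, that is, on sets $U_{(x,y)}\times V_y$ where $V_y$ may be a thin cone in $\mathbf{R}^n\setminus\{0\}$. Your globalization starts from metrics $F_\alpha$ defined on all of $T^oU_\alpha$ for open sets $U_\alpha\subset M$, and nothing in your proposal produces them.

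The paper first glues in the fibre direction:
\begin{enumerate}
\item By compactness of $S^{n-1}$, finitely many cones $V_{y_{(1)}},\dots,V_{y_{(m)}}$ cover $\mathbf{R}^n\setminus\{0\}$; they can be arranged to form a good cover.
\item The base is shrunk to $U_x=\bigcap_i U_{(x,y_{(i)})}$.
\item On each overlap $U_x\times(V_{y_{(i)}}\cap V_{y_{(j)}})$, the local metrics differ by positive constants $\lambda_{ij}$.
\item The cocycle $\ln\lambda_{ij}$ is killed using $H^1_{dR}(U_x\times(\mathbf{R}^n\setminus\{0\}))=0$ and Lemma \ref{lem0031}.
\end{enumerate}
That vanishing is where $n\ge 3$ enters: $S^{n-1}$ is simply connected only for $n\ge 3$. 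For $n=2$ the fibre cocycle may have monodromy around the circle of directions, which is why the paper leaves that case open (Remark \ref{Rem31}). Only after this step does your \v{C}ech argument over $M$ apply; that part agrees with the paper's final step.

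Conversely, the uniqueness lemma you single out as the main obstacle is elementary and holds in every dimension; you invoke $n\ge 3$ and a Schur-type argument for it, but neither is needed. If $L$ induces ${\bf G}$, symmetry of $g_{ij}R^j_{\ k}$ combined with (\ref{yg1}) gives $y_i\tau_k=y_k\tau_i$. Hence $\tau_k=(R/L)\,y_k$, i.e.\ $\tau_k/R=\frac12(\ln|L|)_{.k}$ wherever $R\ne 0$. Your formula with the extra term $\frac13F^2K_{y^k}$ is not correct in this convention.

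So for two metrics, $\varphi=L_2/L_1$ satisfies $\varphi_{.k}=0$ on the dense set $\{R\ne 0\}$, hence everywhere. Then $\varphi_{;i}=\partial_i\varphi=0$ makes $\varphi$ locally constant. This is exactly Lemma \ref{Yplem}, which the paper uses for this purpose. Equivalently, the Ricci identity for $T=L_2/L_1$ with $T_{;i}=0$ gives $R\,T_{.k}=T_{.r}R^r_{\ k}=0$.

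Consequently, your argument as written never uses $n\ge 3$ in an essential way and would equally ``prove'' the two-dimensional case. That is the symptom of the missing fibrewise gluing.
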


In Theorem \ref{Thm01}, since $T^oM$ is connected, the globally
defined Finsler metric $L$ on $M$
  satisfies $L>0$  or  $L<0$ on $T^oM$
  everywhere. Since $Ric$ is positively homogeneous, $T^oM$ can be replaced by the
  (projective) sphere bundle $SM$. If $M$ is simply connected in Theorem \ref{Thm01}, then the first de Rham cohomology
 group is trivial.

We do not know whether Theorem \ref{Thm01} is true for the case
$n=2$, or the case that {\bf G} is a singular spray defined on a
connected conical region $\mathcal{C}$ of $T^oM$ (with $n\ge 2$)
(see the connectivity of $\mathcal{C}$ in Section \ref{pre} and
Lemma \ref{lem21} below). So a natural problem is whether there
are such sprays which are locally metrizable but not globally
metrizable. However, on certain two-dimensional manifold with
non-trivial first de Rham cohomology group, there exist singular
sprays which are locally metrizable but not globally metrizable.
We have the following example (for general case, see Example
\ref{ex001}).

 \begin{ex}
 Let \( M \) be the cylinder \( S^1 \times \mathbf{R} \) (or torus $S^1\times S^1$),
  which has non-trivial first de Rham cohomology group.
 At an arbitrary point $(x^1,x^2)\in M$, let
   $$\phi(x^1, x^2) = 5 + 4 \sin x^1 \sin x^2,\ \ \ \
    \psi(x^1, x^2) = -2 \cos x^1 \cos x^2.
    $$
    Then the spray {\bf G} give by
  $$
    G^1=\phi(x^1,x^2)(y^1)^2,\ \ \ \ \ G^2=\psi(x^1,x^2)(y^2)^2
  $$
  is globally defined on $M$. It can be verified that {\bf G}
   is locally metrizable but not globally metrizable on the conical region $\mathcal{C}$,
   where, on each tangent space $T_xM$ with $x=(x^1,x^2)$,
   $$
 \mathcal{C}_x:=\{(y^1,y^2)\ \big|\ (y^1,y^2)\in T_xM,\ y^1y^2\ne0\}.
   $$
 \end{ex}

 In \cite{BM2}, the authors discuss the local metrizability of
 sprays of scalar curvature (Lemma \ref{Plem1}). Based on this,
 we have the following result.

 \begin{thm}\label{Thm02}
  Let $(M, {\bf G})$ be an $n$-dimensional spray manifold, where {\bf G}
  is defined on a connected conical region $\mathcal{C}$ and it is of scalar curvature
  with  $Ric\ne 0$  everywhere on $\mathcal{C}$, and $M$ has trivial first de Rham cohomology group.
 Further, suppose that {\bf G} is regular with $n\ge 3$, or {\bf G} is singular with $n\ge 2$. Then
  {\bf G}
 is globally metrizable on $\mathcal{C}$ if and only if
  \be\label{Met1}
 \left( \frac{\tau_i}{R} \right)_{.j} = \left( \frac{\tau_j}{R} \right)_{.i}, \quad
 \left( \frac{\tau_i}{R} \right)_{;j} = 0, \quad \det \left( \left( \frac{\tau_i}{R} \right)_{.j}
 + \frac{2\tau_i \tau_j}{R^2} \right) \neq 0,
 \ee
 where $R$ and $\tau_k$ are determined by (\ref{yg1}).
 \end{thm}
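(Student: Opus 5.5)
The plan is to combine the local metrizability criterion of \cite{BM2} (Lemma \ref{Plem1}) with a globalization argument that uses the vanishing of $H^1_{dR}(M)$. Lemma \ref{Plem1} says roughly the following. On any open set where $Ric\ne0$, a spray of scalar curvature is metrizable if and only if the conditions (\ref{Met1}) hold. In that case the metric is determined up to a constant factor by
$$L=\frac{\lambda}{R}\,\exp(\ldots),$$
or, equivalently, by the fact that the $1$-form $\tau_i/R$ is essentially $\frac{1}{L}$ times the vertical derivative of $L$ in a suitable normalization. The key point is that if $L$ is a metric for ${\bf G}$ of scalar curvature with $R\neq0$, then $\tau_k$ is determined by $L$ and $R$. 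Comparing the identity $R^i_{\ k}=R\delta^i_k-\tau_ky^i$ with the Finsler formula $R^i_{\ k}=K(F^2\delta^i_k-FF_{y^k}y^i)$, one gets
$$\frac{\tau_k}{R}=\frac{F_{y^k}}{F}=\frac12\frac{L_{y^k}}{L},$$
where $L=F^2$. So $\tau_k/R=\tfrac12(\ln|L|)_{.k}$, and any local metric is pinned down up to a multiplicative factor that is constant along each fibre, and in fact constant, by the $(\cdot)_{;j}=0$ condition.

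The necessity direction is therefore immediate. A global metric restricts to local metrics, and the local lemma then gives (\ref{Met1}) on $\mathcal C$. The nondegeneracy determinant is exactly the determinant of $g_{ij}/L$ up to a factor, once we substitute $\tau_i/R=\tfrac12 L_{.i}/L$.

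For sufficiency, cover $M$ by contractible coordinate balls $U_\alpha$. On each $\mathcal C|_{U_\alpha}$, Lemma \ref{Plem1} gives a metric $L_\alpha$. We may use the connectivity of $\mathcal C_x$ (Lemma \ref{lem21}) so that each $\mathcal C|_{U_\alpha}$ is connected and $L_\alpha$ has a constant sign. Any two local metrics for ${\bf G}$ on a connected overlap satisfy $(\ln|L_\alpha|-\ln|L_\beta|)_{.k}=0$, so their ratio depends only on $x$. Moreover $\ln|L_\alpha|_{;k}$ is determined by ${\bf G}$, since the horizontal derivative of a metric of ${\bf G}$ vanishes. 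Hence the ratio is locally constant, that is, $L_\alpha=c_{\alpha\beta}L_\beta$ with $c_{\alpha\beta}$ a nonzero constant on each component of $U_\alpha\cap U_\beta$.

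Taking logarithms, $\log|c_{\alpha\beta}|$ is a locally constant \v{C}ech $1$-cocycle. The signs $\operatorname{sgn}c_{\alpha\beta}$ form a $\mathbb Z_2$ cocycle, which I would handle separately.

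The main step is to trivialize these cocycles. The first route is to avoid \v{C}ech language altogether. The horizontal data defines a flat connection on the trivial line bundle: for each local metric, consider the $1$-form on $M$ obtained from $d\ln|L_\alpha|$ pulled back along a fixed vertical section, which is not obviously possible. A cleaner version is to use the de Rham--\v{C}ech isomorphism. Locally constant $\mathbb R$-valued cocycles on a good cover represent classes in $\check H^1(M,\mathbb R)\cong H^1_{dR}(M)=0$, so $\log|c_{\alpha\beta}|=a_\beta-a_\alpha$ with constants $a_\alpha$. Rescaling $L_\alpha$ by $e^{a_\alpha}$ makes $|L_\alpha|$ glue to a global function.

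For the signs, I would use the connectedness of $\mathcal C$. The function $|L|$ is a global smooth positive function, and each local $L_\alpha=\pm|L|$ is a metric. On an overlap where the signs differ, $-|L|$ and $|L|$ are both metrics, so replacing $L_\alpha$ by $|L|$ on every chart still gives a metric. Thus $|L|$ is a global metric. Indeed, $-L$ is a metric whenever $L$ is, so the $\mathbb Z_2$ obstruction is harmless.

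The expected obstacle is the proof that the transition functions are genuinely constant (rather than merely $y$-independent) and the applicability of Lemma \ref{Plem1} in the singular $n=2$ case. In the regular case with $n\ge3$ this is where the hypotheses of the lemma are needed; in the singular case it is where connectedness of $\mathcal C$ is used.
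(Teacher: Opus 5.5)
Your gluing step matches the paper's and is fine: the ratio of two metrics of ${\bf G}$ is $y$-independent and horizontally parallel, hence constant; the logarithms form a locally constant \v{C}ech $1$-cocycle on a good cover, which $H^1_{dR}(M)=0$ trivializes; and signs are harmless because $-L$ induces the same spray.

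The gap is the sentence ``On each $\mathcal{C}|_{U_\alpha}$, Lemma \ref{Plem1} gives a metric $L_\alpha$.'' Lemma \ref{Plem1} only gives local metrizability in the sense of Definition \ref{def1}(ii). That is, it gives a metric on some small conical neighborhood $U\times V$ of each $(x,y)$, with $V$ a small cone. It does not give a metric on all of $\mathcal{C}(U_\alpha)$, whose fibres are the whole of $\mathcal{C}_x$. Building $L_\alpha$ there means integrating the fibrewise closed form $(\tau_i/R)\,dy^i$ over all of $\mathcal{C}_x$. This is exactly where the hypothesis ``regular with $n\ge 3$, or singular'' must enter, and your argument never uses it: every line applies verbatim to a regular spray with $n=2$, which the theorem excludes (cf.\ Remark \ref{Rem31}). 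In that case $\mathcal{C}_x={\bf R}^2\setminus\{0\}$, and $(\tau_i/R)\,dy^i=\frac12 d_y\ln|L|$ can have a nonzero period around the fibre circle. For example, take the local metric $e^{2\sigma(x)}|y|^2e^{2a\theta}$, with $\theta$ the polar angle of $y$ and $a\ne 0$. Its spray is single-valued on $T^oM$. Yet when $Ric\ne0$, every local metric of this spray is a constant multiple of this one, and so picks up the factor $e^{4\pi a}$ around the fibre. Note also that Lemma \ref{lem21} runs the other way: connectedness of $\mathcal{C}$ does not give connected fibres, let alone simply connected ones. Your claim that $L_\alpha/L_\beta$ depends only on $x$ also needs connected fibres.

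The missing idea is the paper's explicit construction of the metric on $\mathcal{C}(U_{x_o})$:
\begin{enumerate}
\item Each $\mathcal{C}_x$ is simply connected: it is ${\bf R}^n\setminus\{0\}$ with $n\ge 3$, or a sector when $n=2$ and ${\bf G}$ is singular. So the Poincar\'e lemma with parameters, applied in $y$, gives $Q$ on $\mathcal{C}(U_{x_o})$ with $Q_{.i}=\tau_i/R$.
\item The second condition in (\ref{Met1}) makes $Q_{;i}$ independent of $y$ and closed on $U_{x_o}$. Hence $2Q_{;i}=-(\ln c)_{;i}$ for some $c(x)>0$.
\item Then $L=ce^{2Q}$ satisfies $L_{.0}=2L$ and $L_{;i}=0$, and it is nondegenerate by the third condition.
\end{enumerate}
Only after this does your \v{C}ech step apply. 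For singular sprays with $n\ge 3$, simple connectivity of $\mathcal{C}_x$ is not automatic; the paper takes it for granted. Alternatively, you could run your \v{C}ech argument on a good cover of $\mathcal{C}$ itself by small conical sets. The obstruction then lives in $H^1(\mathcal{C};{\bf R})$, and controlling it again requires the fibre topology.
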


 In Theorem \ref{Thm02}, we
 cannot assume that  $Ric\ne 0$ almost everywhere on $\mathcal{C}$
with (\ref{Met1}) holding where $Ric\ne 0$ (see the sprays in
Theorem \ref{Thm03}), and note that here a regular spray is not  a
special case of a singular spray.

If considering a singular spray  defined on a conical region
$\mathcal{C}$ with multiple connected components,  we have some
results on the global metrizability of a special class of
two-dimensional Berwald sprays (Theorem \ref{Thm03} and Corollary
\ref{cor0l} below), where a Berwald spray means that its spray
coefficients are polynomials (every Berwald metric induces a
Berwald spray). In Section \ref{Example} below, we give some
Finsler metrics or sprays on the cylinder $S^1\times {\bf R}$ or
the sphere $S^2$, and discuss some of their properties.

    \section{Preliminaries}\label{pre}

   Let $M$ be a connected $n$-dimensional manifold.
 We say that
$\mathcal{C}_x$ is a conical region of $T_xM\setminus \{0\}$ if
$y\in \mathcal{C}_x$,  we have $\lambda y\in \mathcal{C}_x$ for
any $\lambda>0$. Then $\mathcal{C}=\mathcal{C}(M):=\cup_{x\in M}
\mathcal{C}_x$ is called a conical region of $TM\setminus \{0\}$,
if $\mathcal{C}$ is an open set in $TM\setminus \{0\}$. For  a
subset $U(\subset M)$, denote by $\mathcal{C}(U)$ the
corresponding conical region on $U$.

\begin{lem}\label{lem21}
 A conical region $\mathcal{C}=\mathcal{C}(M)$ is connected if each
 $\mathcal{C}_x$ is connected for any $x\in M$.
\end{lem}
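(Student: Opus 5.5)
We argue by contradiction using the connectedness of $M$ and the openness of $\mathcal{C}$. Let $\pi:\mathcal{C}\to M$ be the restriction of the natural projection. Because $\mathcal{C}$ is open in $TM\setminus\{0\}$, the map $\pi$ is an open map: locally $TM\setminus\{0\}$ is a product $U\times({\bf R}^n\setminus\{0\})$, and projections of products are open. Suppose $\mathcal{C}=A\cup B$, where $A$ and $B$ are disjoint nonempty open subsets of $\mathcal{C}$.

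The first step is to show that each fiber lies entirely in one piece. For each $x\in M$, the fiber $\mathcal{C}_x$ is connected by hypothesis. It is covered by the disjoint relatively open sets $A\cap\mathcal{C}_x$ and $B\cap\mathcal{C}_x$, so one of them must be empty. Hence $\mathcal{C}_x\subset A$ or $\mathcal{C}_x\subset B$. This requires each $\mathcal{C}_x$ to be nonempty, which is implicit since the spray is defined over every point; empty fibers can be discarded.

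The second step shows that the two images are disjoint. Set $U_A=\pi(A)$ and $U_B=\pi(B)$. These sets are open in $M$ because $\pi$ is open. They are nonempty because $A$ and $B$ are nonempty. By the first step, no point $x$ can lie in both $U_A$ and $U_B$. Indeed, if $x\in U_A$, then $\mathcal{C}_x$ meets $A$, so $\mathcal{C}_x\subset A$ and $\mathcal{C}_x\cap B=\emptyset$.

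Finally, every point of $M$ with nonempty fiber lies in $U_A\cup U_B$. So $M=U_A\cup U_B$ is a union of two disjoint nonempty open sets, which contradicts the connectedness of $M$. Therefore $\mathcal{C}$ is connected.

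The only genuinely delicate point is the openness of $\pi|_{\mathcal{C}}$. It follows from local triviality of $TM$: an open subset of $U\times({\bf R}^n\setminus\{0\})$ projects to an open subset of $U$. If nonempty fibers are not assumed, the argument still applies with $M$ replaced by $\pi(\mathcal{C})$, but then that image would need to be connected.
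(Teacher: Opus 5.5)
Your proof is correct, but it follows a different route from the paper's.

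The paper argues through path-connectedness. Around each point, $\mathcal{C}$ contains a product $U\times V$ with $U$ connected and open in $M$ and $V$ a connected open cone. Two points of $\mathcal{C}$ are then joined by chaining such products along a curve in $M$, and the connectedness of the fibers $\mathcal{C}_x$ lets the chain pass from one product to the next inside a single fiber. Written out in full, that sketch needs a compactness argument along the curve and the path-connectedness of the open sets $\mathcal{C}_x$. It uses the cone structure to get connected local pieces $V$, and in return it produces an explicit connecting curve.

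Your separation argument needs none of this. It uses only two facts: $\pi|_{\mathcal{C}}$ is open, and the fibers are connected. So it involves neither the conical structure nor any chaining of paths. It would apply unchanged to any open subset of a fibre bundle over a connected base whose fibers are connected.

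You also make explicit a hypothesis that both arguments need and the paper leaves tacit: every $\mathcal{C}_x$ must be nonempty, or at least $\pi(\mathcal{C})$ must be connected. The paper's definition of a conical region does not exclude empty fibers. If they are allowed, with the empty set counted as trivially connected, the statement fails. For example, $\mathcal{C}=\{(x,y)\in T\mathbf{R}^2 : x^1\neq 0,\ y\neq 0\}$ has connected fibers, yet it has two components.
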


The above lemma can be easily proved. Since $\mathcal{C}$ is open,
locally $\mathcal{C}$ is in the form $U\times V$, where $U$ is an
open connected neighborhood in $M$ and $V$ a  connected open cone
in $R^n$. Thus any two points of $\mathcal{C}$ can be connected by
a continuous curve by the assumption that $M$ and each
$\mathcal{C}_x$ are connected.

       A spray on $M$ (or on $\mathcal{C}(M))$ is a smooth vector field {\bf G} on a conical region $\mathcal{C}$
        of $TM \setminus \{0\}$ expressed in a local coordinate system $(x^i, y^i)$ in $TM$ as follows
    $$
    \mathbf{G} = y^i \frac{\partial}{\partial x^i} - 2G^i \frac{\partial}{\partial y^i}
    $$
    where $G^i = G^i(x, y)$ are positively homogeneous functions  of degree two.

A spray {\bf G} is called a Berwald spray  if
 the Berwald curvature   ${\bf B}=(G^{\ i}_{h\ jk})=0$, where $G^{\ i}_{h\ jk} := \dot{\partial}_h
  \dot{\partial}_j \dot{\partial}_k G^i$.
     A spray {\bf G} is said to
    be locally projectively flat if     locally  $G^i = P y^i$ everywhere.
 The  geodesics of a locally projectively flat spray  are locally straight lines.

 The Berwald connection $D$ of a spray {\bf G} is defined by
  $$
  D(\pa_i)=(G^k_{ir}dx^r)\pa_k,\ \ \ \ \ (G^k_{ir}:=\dot{\pa}_rG^k_i,\ \
  G^k_i:=\dot{\pa}_iG^k).
  $$
 For a spray tensor $T=T_idx^i$ as an example,   the horizontal and vertical
 derivatives of $T$ with respect to the Berwald
 connection are given by
  $$
 T_{i;j}=\delta_jT_i-T_rG^r_{ij},\ \ \ \ \ \ \
 T_{i.j}=\dot{\pa}_jT_i,\ \ \ \ (\delta_i:=\pa_i-G^r_i\dot{\pa}_r).
  $$
  For the Ricci identities and Bianchi identities of $D$, one
 can refer to \cite{AIM}.

The Riemann curvature tensor $R^i_{\ k}$ of {\bf G} is defined by
    \be\label{PreE1}
    R^i_{\ k} := 2\partial_k G^i - y^j (\partial_j G^i_k) +
    2G^j G^i_{jk} - G^i_j G^j_k,
    \ee
    where we put $\partial_k := \partial / \partial x^k$.
The Ricci curvature $Ric$ is defined by  $Ric := R^i_{\ i}$. A
spray {\bf G} is said to
     be $R$-flat if $R^i_{\ k} = 0$.

  A spray {\bf G}  of scalar
     curvature satisfies (\ref{yg1}).
 If $R_{.i} = 2\tau_i$ in (\ref{yg1}), then {\bf G} is said
    to be of isotropic curvature (\cite{LS}). A spray {\bf G} is said to be
     of constant curvature if {\bf G} satisfies (\ref{yg1})  with (\cite{Yang2})
    $$
    \tau_{i;k} = 0 \ (\Leftrightarrow \ R = \tau_k = 0, \text{ or } R_{;i} = 0 \ (R \neq 0).
   $$

A two-dimensional spray is always of scalar curvature, and for a
two-dimensional spray, there is an important quantity named
Berwald-Weyl curvature ${\bf W}^o$ which is projectively invariant
and defined by
 \be\label{Berw}
  {\bf W}^o=(R_{.i}+\tau_{i})_{;j}-(R_{.j}+\tau_{j})_{;i}.
 \ee

\begin{lem}(\cite{Ber})\label{Prelem2}
A two-dimensional spray is locally projectively flat if and only
if $${\bf D}={\bf W}^o=0.$$
\end{lem}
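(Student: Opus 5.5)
The plan is to prove Lemma \ref{Prelem2} by reducing both directions to classical statements about projective flatness of two-dimensional sprays. Here ${\bf D}$ denotes the Douglas curvature, $D^{\ i}_{j\ kl}=\dot\pa_j\dot\pa_k\dot\pa_l\big(G^i-\frac{1}{n+1}G^m_{\ m}y^i\big)$, which in dimension two vanishes exactly when $G^i=\frac12(a_{jk}(x)y^jy^k)y^i+Py^i$ up to projective change. Equivalently, ${\bf D}=0$ means ${\bf G}$ is projectively equivalent to an affine (quadratic) spray.

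For necessity, assume locally $G^i=Py^i$. Then $G^i_j=P_{.j}y^i+P\delta^i_j$, so every term of $\dot\pa_j\dot\pa_k\dot\pa_l(G^i-\frac13 G^m_{\ m}y^i)$ cancels, because $G^m_{\ m}=3P$ when $n=2$. Hence ${\bf D}=0$. A direct computation from (\ref{PreE1}) gives the familiar formulas
\[
R=P^2-P_{;0},\qquad \tau_k=3(P_{;k}-PP_{.k})+\ldots .
\]
Substituting these into (\ref{Berw}), the combination $R_{.i}+\tau_i$ becomes the $y$-derivative of a quantity built from $P$. Alternatively, and more cleanly, I would use that ${\bf W}^o$ is a projective invariant. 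Under $G^i\mapsto G^i+Py^i$ one has ${\bf W}^o\mapsto{\bf W}^o$, and the flat spray $G^i=0$ has $R=\tau_k=0$, so ${\bf W}^o=0$ for every projectively flat spray.

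For sufficiency, the main step is the following. Given ${\bf D}=0$, first change projectively so that ${\bf G}$ becomes affine, with $G^i=\frac12\Gamma^i_{jk}(x)y^jy^k$ and $\Gamma$ a torsion-free connection. Both hypotheses persist under this change, by projective invariance. For an affine connection on a surface, ${\bf W}^o$ reduces to the Cotton-type (Liouville) tensor: it is the skew part of the covariant derivative of the projective Schouten tensor $P_{jk}=\frac{1}{n-1}Ric_{(jk)}+\ldots$ of $\Gamma$. Since the Weyl projective tensor vanishes identically in dimension two, the classical theorem (Weyl/Cartan) says $\Gamma$ is projectively flat iff this Cotton tensor vanishes.

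The proof that follows then has one more step. I would construct $P(x,y)=p_i(x)y^i$ solving $\Gamma^i_{jk}+\delta^i_jp_k+\delta^i_kp_j=0$ in suitable coordinates. Equivalently, I would solve the integrable PDE system $f_{;jk}=f\,P_{jk}$ for two independent solutions $f^1,f^2$ plus one more. Its integrability condition is exactly the vanishing of the Cotton tensor, which is ${\bf W}^o=0$, and the ratios of the solutions then give coordinates in which geodesics are straight.

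The main obstacle is the identification of ${\bf W}^o$, computed from (\ref{yg1}) and (\ref{Berw}), with the Cotton tensor of $\Gamma$. This requires expressing $R$ and $\tau_k$ for a quadratic spray in terms of $Ric_{jk}$ and checking that $(R_{.i}+\tau_i)_{;j}$ is linear in $y$ with the right coefficients. I would do this via the known formula $R=Ric_{jk}y^jy^k$ (for $n=2$) and $\tau_k=\frac13(\ldots)$, and expect only routine algebra. Since the lemma is classical (\cite{Ber}), one may also simply cite it.
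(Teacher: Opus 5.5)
Your proposal is correct, but it takes a different route from the paper. The paper gives no argument for Lemma \ref{Prelem2}; it simply quotes Berwald \cite{Ber}. You instead reduce the statement to the projective geometry of torsion-free affine connections on surfaces.

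The step you flag as the main obstacle is indeed routine:
\begin{itemize}
\item For $G^i=\frac12\Gamma^i_{jk}(x)y^jy^k$, formula (\ref{PreE1}) gives $R^i_{\ k}=R^i_{\ jkl}y^jy^l$.
\item In dimension two, $R^i_{\ jkl}=\delta^i_kRic_{jl}-\delta^i_lRic_{jk}$. Hence $R=Ric_{jl}y^jy^l$ and $\tau_k=Ric_{jk}y^j$.
\item Therefore $R_{.k}+\tau_k=(2Ric_{jk}+Ric_{kj})y^j=3P_{jk}y^j$, where $P_{jk}=\frac13(2Ric_{jk}+Ric_{kj})$ is the projective Schouten tensor.
\item For an affine spray, the Berwald horizontal derivative of $T_{jk}(x)y^j$ is $(\nabla_iT_{jk})y^j$. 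So the $(i,j)$-component of ${\bf W}^o$ in (\ref{Berw}) is $3(\nabla_jP_{li}-\nabla_iP_{lj})y^l$.
\item Once $P_{jk}$ is symmetric, this is $-3Y_{ijl}y^l$, where $Y_{ijl}=\nabla_iP_{jl}-\nabla_jP_{il}$ is the Cotton tensor.
\end{itemize}

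What your route buys is a concrete meaning for ${\bf W}^o$: it is the Liouville--Cotton obstruction. The lemma then reduces to the classical affine criterion, which the bare citation leaves opaque. In exchange, you need the projective invariance of ${\bf W}^o$ for arbitrary (non-affine) two-dimensional sprays. You use it twice, and the paper only asserts it.

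It does hold. In the convention (\ref{yg1}), $G^i\mapsto G^i+Py^i$ sends $R_{.k}+\tau_k\mapsto R_{.k}+\tau_k-3(P_{;k}-PP_{.k})$. The extra terms in the antisymmetrized derivative then cancel exactly. This uses the Ricci identity $P_{;k;j}-P_{;j;k}=P_{.m}R^m_{\ kj}$, together with $R^m_{\ kj}=\frac13(R^m_{\ k.j}-R^m_{\ j.k})$ and $R^m_{\ k}=R\delta^m_k-\tau_ky^m$.

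A few small repairs:
\begin{itemize}
\item Your displayed Douglas normal form $\frac12(a_{jk}y^jy^k)y^i$ is cubic. It should read $\frac12\Gamma^i_{jk}(x)y^jy^k+Py^i$.
\item The formula $R=P^2-P_{;0}$ holds only with the flat derivative $P_{x^m}y^m$.
\item Your $\tau_k$ has the opposite sign to (\ref{yg1}). You use neither of these last two formulas afterwards.
\item The system $f_{;jk}=fP_{jk}$ is consistent only when $P_{jk}$ is symmetric, since the left side is symmetric. So first replace $\Gamma^i_{jk}$ by $\Gamma^i_{jk}+\delta^i_jp_k+\delta^i_kp_j$ with $p_k=-\frac13\Gamma^i_{ik}$. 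This shifts $Ric_{jk}-Ric_{kj}=\pa_j\Gamma^i_{ik}-\pa_k\Gamma^i_{ij}$ by $3(\pa_jp_k-\pa_kp_j)$, which makes it vanish.
\end{itemize}
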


For higher dimensions in the  above lemma, ${\bf W}^o$ is replaced
by the Weyl curvature.

\begin{Def} (cf. \cite{AIM, Shen6})
A function $L=L(x,y)(\ne 0)$ is called a Finsler metric
  on a manifold $M$ if

 {\rm (i)}
   $L$ is defined on a conical region $\mathcal{C}$ of  $TM\setminus \{0\}$
  and $L$ is $C^{\infty}$;

 {\rm (ii)} $L$
 is positively homogeneous of degree two;

{\rm (iii)} the fundamental metric tensor $g_{ij}:=(L/2)_{y^iy^j}$
is non-degenerate.
\end{Def}

 If a Finsler metric $L>0$, we
put $L=F^2$, and in this case, $F$ is also called a Finsler metric
and $F$ is positively homogeneous of degree one.

 Any Finsler metric $L$ induces
a natural spray whose coefficients $G^i$ are given by
 \be\label{Gis}
 G^i:=\frac{1}{4}g^{il}\big \{L_{x^ky^l}y^k-L_{x^l}\big
 \},
 \ee
where $(g^{ij})$ is the inverse of $(g_{ij})$. $L$ is said to be
of {\it scalar flag curvature} $K=K(x,y)$ if
 \be\label{SCFF}
 R^i_{\ k}=K(L\delta^i_k-y^iy_k),\ \ (y_k:=g_{km}y^m).
 \ee
 If $K_{.i}=0$, then $L$ is said to be of
 {\it isotropic flag curvature}. $L$ is said to be of
 {\it constant flag curvature} if $K$ is constant. A Finsler metric is of scalar (resp. isotropic,
     constant) flag curvature if and only if its induced spray is of scalar (resp. isotropic,
      constant) curvature (\cite{Shen6, LS, Yang2}).

\begin{Def}\label{def1}
 Let $(M,{\bf G})$ be a spray manifold with {\bf G} being defined on a conical region
 $\mathcal{C}=\mathcal{C}(M)$ of $TM\setminus \{0\}$.
  \ben
 \item[{\rm (i)}]  {\bf G} is said to be globally
metrizable on $M$ (or on $\mathcal{C}(M)$) if there is a Finsler
metric $L$ defined on
  $\mathcal{C}(M)$ and $L$ induces {\bf G}.
  \item[{\rm (ii)}]   {\bf G} is said to be
 locally metrizable on $M$ if for each $(x,y)\in \mathcal{C}$, there is
 a conical neighborhood $U(\subset \mathcal{C})$ of $(x,y)$ such that ${\bf G}|_U$ is
 induced by a Finsler metric  on $U$.
 \een
 \end{Def}

\begin{lem}(\cite{Yang2})\label{Yplem}
Let $T$ be a positively homogeneous function of degree zero on a
spray manifold of scalar curvature with nonzero Ricci curvature.
If $T_{;i}=0$, then $T$ is  constant.
\end{lem}

\begin{lem}\label{Plem1} (\cite{BM2})
Let ${\bf G}$ be a spray  of scalar curvature $R^i_{\
k}=R\delta^i_k-\tau_ky^i$ with $R\ne 0$. Then {\bf G} is locally
metrizable if and only if  (\ref{Met1}) holds. In this case, {\bf
G} is locally induced by the following metric
 \beq
 &&\hspace{1cm}L(x,y)= c e^{2Q},\label{Met6}\\
&&\Big(\frac{\tau_{i}}{R} = Q_{.i}, \quad Q_{;i} =
   -\frac{1}{2}(\ln|c|)_{;i}\Big).\label{Met5}
 \eeq

\end{lem}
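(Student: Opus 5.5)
The plan is to prove both directions of the equivalence through the single $1$-form $\theta_i:=\tau_i/R$. The guiding fact is that for a Finsler metric $L$ of scalar flag curvature one expects $\theta_i=y_i/L=\frac12(\ln|L|)_{.i}$. So $Q$ in (\ref{Met6})--(\ref{Met5}) should be $\frac12\ln|L|$ up to a function of $x$.

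\textbf{Necessity.} Suppose $L$ locally induces ${\bf G}$. I will use three standard facts for the Berwald connection of the spray of $L$:
\begin{itemize}
\item $L_{;i}=0$;
\item $y_i R^i_{\ k}=0$;
\item horizontal and vertical derivatives of a scalar function commute in the tensorial sense, i.e. $(f_{.i})_{;j}=(f_{;j})_{.i}$, because the $G^r_{ij}$ terms are absorbed by the definition of $T_{i;j}$.
\end{itemize}
Contracting (\ref{yg1}) with $y_i$ gives $0=Ry_k-\tau_kL$. Hence $\theta_k=y_k/L=\frac12(\ln|L|)_{.k}$, and its vertical derivative is symmetric in $i,j$, which is the first condition of (\ref{Met1}). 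Next, $y_{i;j}=\frac12(L_{;j})_{.i}=0$ and $L_{;j}=0$ give $\theta_{i;j}=0$. Finally,
$$\theta_{i.j}+2\theta_i\theta_j=\frac{g_{ij}}{L}-\frac{2y_iy_j}{L^2}+\frac{2y_iy_j}{L^2}=\frac{g_{ij}}{L},$$
which is nondegenerate. Moreover $Q=\frac12\ln|L|$ with $c=\pm1$ realizes (\ref{Met5}).

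\textbf{Sufficiency, step 1.} Work on a small simply connected conical neighbourhood. Since $\theta_{i.j}=\theta_{j.i}$, the Poincaré lemma in the fibre variables gives a local $Q$ with $Q_{.i}=\theta_i$. Because $Q_{.i}y^i=\tau_0/R=1$, the function $e^{2Q}$ is positively homogeneous of degree two. By the commutation rule, $(Q_{;i})_{.j}=(Q_{.j})_{;i}=\theta_{j;i}=0$, so $Q_{;i}=f_i(x)$ depends on $x$ only. I then want $f_i=\partial_i\varphi$ for some $\varphi(x)$, and set $c=\pm e^{-2\varphi}$, so that $(ce^{2Q})_{;i}=0$.

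\textbf{Sufficiency, step 2 (main obstacle).} Closedness of $f$ is the key point. The Ricci identity gives $Q_{;i;j}-Q_{;j;i}=-Q_{.r}R^r_{\ ij}$, and since $G^r_{ij}$ is symmetric this reads
$$\partial_jf_i-\partial_if_j=-\theta_rR^r_{\ ij}.$$
I plan to show that the right-hand side vanishes using $R^r_{\ ij}=\frac13(R^r_{\ i.j}-R^r_{\ j.i})$. From (\ref{yg1}) and $\theta_ry^r=1$ we get $\theta_rR^r_{\ i}=\tau_i-\tau_i=0$. Differentiating vertically,
$$\theta_rR^r_{\ i.j}=-\theta_{r.j}R^r_{\ i}=-R\theta_{i.j}+\tau_i\,\theta_{r.j}y^r .$$
By symmetry and homogeneity of degree $-1$, $\theta_{r.j}y^r=\theta_{j.r}y^r=-\theta_j$. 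Thus $\theta_rR^r_{\ i.j}=-R\theta_{i.j}-\tau_i\tau_j/R$, which is symmetric in $i,j$. Hence $\theta_rR^r_{\ ij}=0$, $f$ is closed, and $\varphi$ exists locally.

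\textbf{Sufficiency, step 3.} Set $L=ce^{2Q}$. Then $L$ is homogeneous of degree two. Its fundamental tensor is
$$g_{ij}=\tfrac12L_{.i.j}=L\bigl(\theta_{i.j}+2\theta_i\theta_j\bigr),$$
which is nondegenerate by the third condition of (\ref{Met1}). Finally, $L_{;i}=0$ means $\partial_iL=G^r_iL_{.r}$. Substituting this into (\ref{Gis}), with Euler's relation and $G^r_i=\dot\partial_iG^r$, yields that the spray of $L$ coincides with $G^i$. This is the standard Rapcsák-type argument, and it completes the proof together with the explicit form (\ref{Met6})--(\ref{Met5}).
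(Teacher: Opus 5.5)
Your proposal is correct and follows essentially the same route as the Bucataru--Muzsnay argument that the paper sketches in the proof of Theorem \ref{Thm02}: integrate $\tau_i/R$ in $y$ to get $Q$, show $Q_{;i}$ is a closed form in $x$ alone, integrate it to get $c$, and check that $L=ce^{2Q}$ is homogeneous of degree two, nondegenerate, and satisfies $L_{;i}=0$. You also supply details the paper only asserts or leaves to the citation: the necessity direction via $\tau_k/R=y_k/L$, and the closedness of $Q_{;i}dx^i$ via the Ricci identity together with the symmetry of $\theta_rR^r_{\ i.j}$; the overall sign in your Ricci identity depends on convention, but this is harmless because the right-hand side vanishes.
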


\section{Proofs of Theorem \ref{Thm01} and \ref{Thm02}}

In this section, we will give the proof of Theorem \ref{Thm01} and
Theorem \ref{Thm02}.

\

\noindent{\it Proof of Theorem \ref{Thm01} :}

 We only need to prove that if {\bf G} is
locally metrizable, then {\bf G} is globally metrizable.

It is known that (cf. \cite{Wh}) a smooth manifold (satisfying
second countable axiom)
    admits a good open covering $\mathcal{U}=\{U_i\ |\ i\in\Lambda\}$,
     which means that  every non-empty intersection of finitely many of the $U_i$'s is contractible.

\begin{lem}\label{lem0031}
 Let $\widetilde{M}$ be a manifold with trivial first de Rham cohomology
 group.  For a good open covering $\mathcal{U}=\{U_i\ |\
 i\in\Lambda\}$ of $\widetilde{M}$, suppose that on each  $U_i\cap U_j$ (not empty), we
 have a constant $\tau_{ij}$ such that
  \be\label{Sec3E3}
   \tau_{ij} + \tau_{jk} + \tau_{ki} = 0.
   \ee
  Then there exists a constant $\tau_i$ on each $U_i$ such that
   \be\label{Sec3E004}
     \tau_{ij} = \tau_i -\tau_j.
     \ee
\end{lem}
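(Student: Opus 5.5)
The plan is to turn the constant cocycle $\{\tau_{ij}\}$ into a closed $1$-form on $\widetilde{M}$, use $H^1_{dR}(\widetilde{M})=0$ to write that form as $df$, and then read off the constants $\tau_i$ from $f$.

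\emph{Step 1: a partition of unity.} First I note that (\ref{Sec3E3}) forces the standard cocycle identities. Taking $i=j=k$ on $U_i$ gives $\tau_{ii}=0$. Taking $k=i$ then gives $\tau_{ij}=-\tau_{ji}$. Next I choose a smooth partition of unity $\{\rho_k\}$ subordinate to $\mathcal{U}$. On each $U_i$ I define the smooth function
\[
f_i:=\sum_k \tau_{ik}\rho_k ,
\]
where each term is understood to be zero outside $U_i\cap U_k$. This is smooth on $U_i$ because $\mathrm{supp}\,\rho_k\subset U_k$ and the family of supports is locally finite.

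\emph{Step 2: the $f_i$ glue, up to constants.} On $U_i\cap U_j$, using (\ref{Sec3E3}) together with antisymmetry, I get $\tau_{ik}-\tau_{jk}=\tau_{ij}$ on $U_i\cap U_j\cap U_k$. Summing against $\rho_k$ and using $\sum_k\rho_k=1$ gives
\[
f_i-f_j=\tau_{ij}\quad\text{on } U_i\cap U_j .
\]
Since each $\tau_{ij}$ is constant, $df_i=df_j$ on overlaps. Hence the local forms $df_i$ patch to a global closed $1$-form $\omega$ on $\widetilde{M}$.

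\emph{Step 3: use the topology.} Because $H^1_{dR}(\widetilde{M})=0$, there is a global smooth function $f$ with $df=\omega$. On each $U_i$ this gives $d(f_i-f)=0$. Each $U_i$ is contractible, hence connected, so $f_i-f$ equals a constant $\tau_i$ on $U_i$. Then on $U_i\cap U_j$,
\[
\tau_i-\tau_j=f_i-f_j=\tau_{ij},
\]
which is exactly (\ref{Sec3E004}).

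\emph{Main obstacle.} The only delicate point is Step 2. One has to check carefully that the cocycle condition holds on all triple intersections, including degenerate index choices, so that $\tau_{ii}=0$ and antisymmetry are really available. One also has to handle terms where $U_i\cap U_k$ is empty: these drop out, because $\rho_k$ vanishes on $U_i\cap U_j$ outside $U_k$. The goodness of the cover is used only through the connectedness of each $U_i$. Contractibility of intersections is not needed here, although it is what identifies Čech and de Rham cohomology in general.
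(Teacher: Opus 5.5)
Your proof is correct, but it takes a different route from the paper. The paper simply invokes the theorem that, for a good cover, $H^1_{\mathrm{dR}}(\widetilde M)\cong H^1_{Ce}(\mathcal U,\mathbf R)$. Vanishing of the de Rham group then makes the \v{C}ech $1$-cocycle $\{\tau_{ij}\}$ a coboundary.

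You instead prove by hand exactly the half of that isomorphism that is needed. The partition of unity converts the cocycle into functions $f_i$ with $f_i-f_j=\tau_{ij}$, the forms $df_i$ glue to a closed $\omega$, exactness of $\omega$ gives a global $f$, and the constants are $\tau_i=f_i-f$. This is the degree-one zig-zag from the standard proof of the \v{C}ech--de Rham isomorphism.

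The paper's version buys brevity, at the price of a black box whose proof genuinely uses contractibility of all finite intersections. Yours is self-contained and shows the lemma needs much less: only connectedness of each $U_i$. As your Step 2 shows, it would even work with $\tau_{ij}$ merely locally constant on possibly disconnected overlaps. That weaker hypothesis would lighten the bookkeeping in the applications, e.g.\ the unargued claim in the proof of Theorem \ref{Thm01} that $\{U_x\times V_{y_{(i)}}\}$ can be taken to be a good cover.

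Your treatment of degenerate indices ($\tau_{ii}=0$, $\tau_{ji}=-\tau_{ij}$) is appropriate. In the paper these are implicit in the use of alternating \v{C}ech cochains, and they hold in the application since $\lambda_{ii}=1$ and $\lambda_{ij}\lambda_{ji}=1$.
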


{\it Proof :}
 Due to the quantities $\tau$ in (\ref{Sec3E3}), we
  consider the constant presheaf
  with the common addition group
 in {\bf R}, which assigns to every open set $V$ of  $\widetilde{M}$ a
 locally constant function $V\mapsto {\bf R}$. Then we
  have the C$\breve{e}$ch cohomology group
 $H^1_{Ce}(\mathcal{U},{\bf R})$.

   Since $\mathcal{U}$ is a good covering, the first de Rham cohomology group $H^1_{\mathrm{dR}}(\widetilde{M})$
   is isomorphic to the first C$\breve{e}$ch cohomology group
   $H^1_{Ce}(\mathcal{U},{\bf R})$. By the assumption that
   $H^1_{\mathrm{dR}}(\widetilde{M})=0$, we have
   $H^1_{Ce}(\mathcal{U},{\bf R})=0$. This shows that any
   C$\breve{e}$ch 1-cocycle is a cobundary. It follows from (\ref{Sec3E3})
   that $\{\tau_{ij}\}$ is a 1-cocycle. Thus $\{\tau_{ij}\}$ is a
   cobundary, which implies that
    there exists constant $\tau_i$ on each $U_i$  such that
    (\ref{Sec3E004}) holds.  \qed

\

 Assume that  $\mathbf{G}$ is locally metrizable on $T^oM$. For an
 arbitrarily fixed point $x\in M$, it is clear that there is a
 neighborhood $\widetilde{U}$ of $x$ such that
 $T^o\widetilde{U}=\widetilde{U}\times R^n_o$, where
 $R^n_o:=R^n-\{0\}$.
By Definition \ref{def1}(ii),  around each point $(x,y)\in
T^o\widetilde{U}$ (where $x$ is fixed), there exists a conical
neighborhood $\mathcal{C}_{(x,y)}:=U_{(x,y)}\times V_y$
    on which some Finsler metric $L_{(x,y)}>0$ induces $\mathbf{G}$ on
    $\mathcal{C}_{(x,y)}$, where $U_{(x,y)}(\subset \widetilde{U})$
    including $x$ is connected and $(y\in)V_y$ is a connected conical region in
    $R^n_o$. It is clear that $\{V_y\big|y\in R^n_o\}$  and $\{V_y\cap S^{n-1}\big|y\in R^n_o\}$
    form two open
    coverings of $R^n_o$ and
     the unit sphere $S^{n-1}$ of
    $R^n_o$ respectively. Since $S^{n-1}$ is compact, $\{V_{y_{(i)}}\big|y_{(i)}\in
    R^n_o,i=1,\cdots,
    m\}$ for some finite integer $m$ forms a finite open covering
    of $R^n_o$.
 Put $U_x:=\cap_{i=1}^m U_{(x,y_{(i)})}$, which is an open
 neighborhood of $x$ (we may assume that $U_x$ is simply connected). Now
  $T^oU_x=U_x\times R^n_o$ is simply connected (since $n\ge 3$).
  Further, we can let $\{V_{y_{(i)}},1\le i\le m\}$ be a good open
  covering of $R^n_o$. Then
 $\{U_x\times V_{y_{(i)}},1\le i\le m\}$ is a good open covering of
 $T^oU_x$.

 Now as shown above, on each $U_x\times V_{y_{(i)}}$, we have a
 Finsler metric $L_{(x,y_{(i)})}>0$ inducing the spray {\bf G}.  Since $Ric\ne 0$ almost everywhere
  on $T^oU_x$, it follows from Lemma \ref{Yplem} and continuity that
on $U_x \times (V_{y_{(i)}}\cap V_{y_{(j)}})$,
    \be\label{Sec3E01}
    L_{(x,y_{(j)})} = \lambda_{ij} L_{(x,y_{(i)})},\ \ \ (for\ some\
    constant\ \lambda_{ij}>0).
    \ee
 It is clear from (\ref{Sec3E01}) that
    $\lambda_{ij}\lambda_{ji}=1$.
On $U_x \times (V_{y_{(i)}}\cap V_{y_{(j)}}\cap V_{y_{(k)}})$
(non-empty), (\ref{Sec3E01}) also implies
 \be\label{Sec3E2}
 \lambda_{ij}
\lambda_{jk} = \lambda_{ik}.
 \ee
Further, let
     $\tau_{ij}: = \ln \lambda_{ij}.$
     Then  (\ref{Sec3E2}) gives (\ref{Sec3E3}).
Thus, on the manifold $T^oU_x$, by Lemma \ref{lem0031}, it follows
from (\ref{Sec3E3}) that
    there exist constant $\tau_i$ on each $U_x \times V_{y_{(i)}}$  such that
     \be\label{Sec3E4}
     \tau_{ij} = \tau_i -\tau_j, \ \ {\rm or\ equivalently}\ \  \lambda_{ij} =
     e^{\tau_i}/ e^{\tau_j}.
     \ee
Thus on $U_x \times (V_{y_{(i)}}\cap V_{y_{(j)}})$, it follows
from (\ref{Sec3E01}) and (\ref{Sec3E4}) that
 \be\label{Sec3E07}
e^{\tau_j}L_{(x,y_{(j)})}=e^{\tau_i}L_{(x,y_{(i)})}.
 \ee
 Therefore, On each  $U_x \times V_{y_{(i)}}$, define a new
  function $\widetilde{L}_i
:=e^{\tau_i}L_{(x,y_{(i)})}$ which also induces {\bf G} on $U_x
\times V_{y_{(i)}}$ since $L_{(x,y_{(i)})}$ does.
   It is clear from (\ref{Sec3E07}) that  $\widetilde{L}_i$ and
$\widetilde{L}_j$ are equal on $U_x \times (V_{y_{(i)}}\cap
V_{y_{(j)}})$. Thus, we obtain a globally defined Finsler metric
$L_x>0$ on $U_x\times R^n_o$, which coincides with each
$\widetilde{L}_i$ on  $U_x \times V_{y_{(i)}}$. Clearly, $L_x$
induces {\bf G} on $T^oU_x$.

 Now $\widetilde{\mathcal{U}}=\{U_x,x\in M\}$
 forms a open covering of $M$. Take a sub-open covering
 $\mathcal{U}=\{U_i\ |\ i\in\Lambda\}$ of $M$ from $\widetilde{\mathcal{U}}$.
    Without loss of generality, assume that $\mathcal{U}$ is a good covering of $M$.
    As shown above, we have a Finsler metric $L_i>0$ on $T^oU_i$   which induces
     $\mathbf{G}$ on $T^oU_i$.

    For any non-empty intersection $U_{ij}:=U_i \cap U_j$, both $L_i$ and $L_j$ induce the same spray $\mathbf{G}$
     on $T^oU_{ij}$.
Since $U_{ij} $ is connected by assumption, $T^oU_{ij}$ is also
connected. Since $Ric\ne 0$ almost everywhere on $T^oU_{ij}$, it
follows from Lemma \ref{Yplem} and continuity that on $T^oU_{ij}$,
   $$
    L_j = \lambda_{ij} L_i, \ \ \ (for\ some\
    constant\ \lambda_{ij}>0).
   $$
Repeat a similar proof of starting from (\ref{Sec3E01}) to obtain
a Finsler metric on  $U_x \times R^n_o$. Then on the manifold $M$,
using Lemma \ref{lem0031} again, we obtain a Finsler metric $L$ on
$T^oM$, which is a  constant multiple of each local function
   $L_i$ on $T^oU_i$. Thus $L$  induces {\bf G} on
   $T^oM$ since each $L_i$ does on
   $T^oU_i$.   \qed

\begin{rem}\label{Rem31}
  The above proof is not applicable for the case $n=2$ in Theorem
  \ref{Thm01}, since $T_x^oM$ ($x\in M$) has non-trivial first de Rham cohomology
 group. It is also not applicable for the case that {\bf G} is a singular spray defined on a
connected conical region  of $T^oM$ (with $n\ge 2$), since the
neighborhood $U_x$ in the above proof might collapse to the point
$x$.
\end{rem}

\noindent{\it Proof of Theorem \ref{Thm02} :}

We only need to prove that if (\ref{Met1}) holds, then {\bf G} is
globally metrizable on $\mathcal{C}=\mathcal{C}(M)$. We follow the
main steps of the proof in \cite{BM2} for the local metrizability
of {\bf G} under the condition (\ref{Met1}), and make necessary
adjustment somewhere such that {\bf G} is globally metrizable on
$\mathcal{C}=\mathcal{C}(M)$.

The first condition in (\ref{Met1}) implies that $(\tau_i/R)dy^i$
is a closed 1-form in terms of the variable $y$. For an arbitrary
point $x_o\in M$ and a small neighborhood $U_{x_o}$ including
$x_o$, since each conical region $\mathcal{C}_x$ for $x\in
U_{x_o}$ is simply connected, it follows from the construction
proof of Poincare Lemma that there is  a smooth function
$Q=Q(x,y)$ defined on $\mathcal{C}(U_{x_o})$ such that
 $$d_yQ=(\tau_i/R)dy^i,\ \ \ or \ \ \ Q_{.i}=\tau_i/R.$$
The second condition in (\ref{Met1}) implies that $Q_{;i}$ is
independent of $x\in U_{x_o}$ and $Q_{;i}dx^i$ is a closed 1-form
on $U_{x_o}$. We may assume that $U$ is simply connected. Then by
Poincare Lemma, there is a scalar function $c=c(x)> 0$ on
$U_{x_o}$ satisfying
 $
 2Q_{;i}=-(\ln c)_{;i}.
 $
Define a function $L$ on $\mathcal{C}(U_{x_o})$ by
 $
 L:=ce^{2Q}.
 $
Then $L$ satisfies $L_{.0}=2L$ and $L_{;i}=0$, which, together
with the third condition in (\ref{Met1}), shows  that $L$ is a
Finsler metric inducing {\bf G} on $\mathcal{C}(U_{x_o})$.

As shown above, for each $x_o\in M$, we have a small neighborhood
$U_{x_o}$. In this way, we  can have a good open covering of $M$
denoted by $\mathcal{U}=\{U_i\ |\ i\in\Lambda\}$, where each $U_i$
is simply connected, and on $\mathcal{C}(U_i)$ is defined a
Finsler metric $L_i$. For any non-empty intersection $U_{ij}:=U_i
\cap U_j$, both $L_i$ and $L_j$ induce the same spray $\mathbf{G}$
     on $\mathcal{C}(U_{ij})$. Thus we get
 $$
    L_j = \lambda_{ij} L_i, \ \ \ (for\ some\
    constant\ \lambda_{ij}>0).
   $$
Then similar to the  last paragraph in the proof of Theorem
\ref{Thm01}, we obtain a Finsler metric $L$ on $\mathcal{C}(M)$,
which is a constant multiple of each local function
   $L_i$ on $\mathcal{C}(U_i)$. Therefore, $L$  induces {\bf G} on
   $\mathcal{C}(M)$.    \qed

    \section{Two-Dimensional Berwald Sprays}

 In this section, we consider the global
metrizability of a class of two-dimensional Berwald sprays on a
conical region with multiple connected components. Each spray {\bf
G} of this class on a manifold $M$ has the following local form on
some local coordinate neighborhood $(U,\{x^ i\})$ everywhere on
$M$:
 \be\label{TDBS00}
    G^1  = p_1 (y^1)^2, \ \ \ G^2  = \frac{p_2 }{\kappa}(y^2)^2,
 \ee
 where $p=p(x^1,x^2)$ is a local function in $U$ and $p_i:=p_{x^i}$,
 and $\kappa$ is a constant on $M$. We will also consider some
  special curvature properties of {\bf G} when $\kappa=-2$. The
 spray {\bf G} in (\ref{TDBS00}) is derived from the following
 lemma.

    \begin{lem}\label{ProMet2}
 Let {\bf G} be a two-dimensional Berwald spray on an open set $U$ of
 $R^2$ given by
  \be\label{TDBS1}
 G^1=\phi(x^1,x^2)(y^1)^2,\ \ \ \ \ G^2=\psi(x^1,x^2)(y^2)^2.
  \ee
 Let $Ric\ne 0$ almost everywhere on $U$.  Where $Ric\ne 0$, if {\bf G} is locally
 metrizable, then
  $$
 \phi_{x^2}=\kappa \psi_{x^1},\ \ (\kappa=constant\ne 0,-1\ and \ \psi_{x^1}\ne
 0),
 $$
 or there is a function \(p=p(x^1, x^2)\)  satisfying
 \be\label{Mety11}
  p_1 = \phi, \ \ \ p_2  = k\psi,\ \ \
   (p_i=p_{x^i}).
  \ee
Conversely, if (\ref{Mety11}) holds with $\kappa\ne 0,-1$ on $U$,
then {\bf G} is
 metrizable on $U$,
and in this case,  {\bf G} on $U$ can be induced by the following
 Finsler metric
 \be\label{Met0y11}
 L=e^{\frac{4p}{1+\kappa}}\big[(y^1)^2|y^2|^{2\kappa}\big]^{\frac{1}{1+\kappa}}.
 \ee

\end{lem}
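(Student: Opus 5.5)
Since $n=2$, every spray is of scalar curvature, so I will compute $R$ and $\tau_k$ in (\ref{yg1}) for the spray (\ref{TDBS1}) explicitly. Then, on the open set where $Ric\ne 0$, I will apply Lemma \ref{Plem1} and read off what (\ref{Met1}) says about $\phi,\psi$. For the converse I will check directly that (\ref{Met0y11}) is a Finsler metric inducing ${\bf G}$.

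\emph{Curvature.} From (\ref{TDBS1}), the only nonzero connection coefficients are
$$G^1_1=2\phi y^1,\quad G^2_2=2\psi y^2,\quad G^1_{11}=2\phi,\quad G^2_{22}=2\psi.$$
Plugging these into (\ref{PreE1}) should give
$$R^1_{\ 1}=-2\phi_{x^2}y^1y^2,\quad R^1_{\ 2}=2\phi_{x^2}(y^1)^2,\quad R^2_{\ 2}=-2\psi_{x^1}y^1y^2,\quad R^2_{\ 1}=2\psi_{x^1}(y^2)^2.$$
Comparing with $R^i_{\ k}=R\delta^i_k-\tau_ky^i$ gives
$$\tau_1=-2\psi_{x^1}y^2,\qquad \tau_2=-2\phi_{x^2}y^1,\qquad R=-2(\phi_{x^2}+\psi_{x^1})y^1y^2 .$$
Hence $Ric\ne0$ exactly where $\phi_{x^2}+\psi_{x^1}\ne 0$ (and $y^1y^2\ne 0$). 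Set
$$a:=\frac{\psi_{x^1}}{\phi_{x^2}+\psi_{x^1}},\qquad b:=\frac{\phi_{x^2}}{\phi_{x^2}+\psi_{x^1}},\qquad a+b=1 .$$
Then $\tau_1/R=a/y^1$ and $\tau_2/R=b/y^2$, with $a,b$ functions of $x$ only.

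\emph{Necessity.} On the set where $Ric\ne0$, Lemma \ref{Plem1} says local metrizability is equivalent to (\ref{Met1}).
\begin{itemize}
\item The first condition in (\ref{Met1}) holds automatically, since $(a/y^1)_{.2}=0=(b/y^2)_{.1}$.
\item For the second condition I compute $T_{i;j}=\delta_jT_i-T_rG^r_{ij}$ with $T_1=a/y^1$. In $T_{1;1}$ the term $2\phi a/y^1$ coming from $-G^1_1\dot\pa_1T_1$ cancels against $-T_1G^1_{11}$. This leaves $T_{1;1}=a_{x^1}/y^1$ and $T_{1;2}=a_{x^2}/y^1$, and similarly for $T_2$. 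So the second condition is equivalent to $a$ and $b$ being constant (on each connected piece).
\end{itemize}
If $a\ne0$, i.e.\ $\psi_{x^1}\ne0$, put $\kappa:=b/a$; this gives $\phi_{x^2}=\kappa\psi_{x^1}$.
\begin{itemize}
\item $\kappa\ne-1$, since otherwise $\phi_{x^2}+\psi_{x^1}=0$, contradicting $Ric\ne0$.
\item $\kappa\ne0$ comes from the determinant condition in (\ref{Met1}). The matrix $(\tau_i/R)_{.j}+2\tau_i\tau_j/R^2$ has diagonal entries $(2a^2-a)/(y^1)^2$, $(2b^2-b)/(y^2)^2$ and off-diagonal entries $2ab/(y^1y^2)$. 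Its determinant is $ab(1-2a-2b)/(y^1y^2)^2=-ab/(y^1y^2)^2$, so it is nonzero iff $ab\ne0$, giving $\kappa\ne0$.
\end{itemize}
Since $\kappa$ is constant, $\phi_{x^2}=(\kappa\psi)_{x^1}$ says the 1-form $\phi\,dx^1+\kappa\psi\,dx^2$ is closed. By the Poincar\'e lemma on a simply connected neighbourhood there is $p$ with (\ref{Mety11}). The degenerate alternative ($\psi_{x^1}=0$) is excluded by $ab\ne 0$ as well, which I will state as the dichotomy in the lemma.

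\emph{Sufficiency.} Assume (\ref{Mety11}) with $\kappa\ne0,-1$, and take $L$ as in (\ref{Met0y11}). Then
$$\ln L=\frac{4p}{1+\kappa}+\frac{2\ln|y^1|+2\kappa\ln|y^2|}{1+\kappa},$$
so $L$ is positively homogeneous of degree two. Next,
$$\delta_1\ln L=\frac{4\phi}{1+\kappa}-2\phi y^1\cdot\frac{2}{(1+\kappa)y^1}=0,\qquad \delta_2\ln L=\frac{4\kappa\psi}{1+\kappa}-2\psi y^2\cdot\frac{2\kappa}{(1+\kappa)y^2}=0,$$
so $L_{;i}=0$. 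A direct Hessian computation, which is the same determinant as above with $a=1/(1+\kappa)$, $b=\kappa/(1+\kappa)$, shows $\det(g_{ij})\ne0$ exactly when $\kappa\ne0$. Finally, the standard fact used in the proof of Theorem \ref{Thm02} is that a nondegenerate degree-two $L$ with $L_{;i}=0$ induces ${\bf G}$ via (\ref{Gis}).

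The main obstacle is bookkeeping rather than ideas. The points needing care are getting the signs of $R^i_{\ k}$ right, handling the zero set of $Ric$ and of $y^1y^2$ (the metric lives on the conical region $y^1y^2\ne0$), and making sure the case split $\psi_{x^1}=0$ versus $\psi_{x^1}\ne0$ matches the statement.
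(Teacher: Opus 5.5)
Your proposal is correct and follows the paper's route. You compute $R=-2(\phi_{x^2}+\psi_{x^1})y^1y^2$, $\tau_1=-2\psi_{x^1}y^2$ and $\tau_2=-2\phi_{x^2}y^1$, then apply Lemma~\ref{Plem1}: the first condition of (\ref{Met1}) holds automatically, the second forces $\phi_{x^2}/\psi_{x^1}$ to be constant, the third forces $\phi_{x^2}\psi_{x^1}\neq 0$, and $Ric\neq 0$ rules out $\kappa=-1$.

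The differences are only presentational. You obtain $p$ directly from the Poincar\'e lemma rather than through $Q$ and the factor $c$ of (\ref{Met5})--(\ref{Met6}). You also verify the converse explicitly, via $\delta_iL=0$ and the determinant $-ab/(y^1y^2)^2$, whereas the paper simply calls it ``clear''.
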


{\it Proof :} Let {\bf G} be locally metrizable where $Ric\ne 0$.
We will use Lemma \ref{Plem1} to determine $\phi$ and $\psi$  and
 the corresponding local Finsler metric inducing {\bf G}.

 By a direct
calculation, we have $R^i_{\ k}=R\delta^i_k-\tau_ky^i$ with
 \be\label{Met00011}
 R=-2(\phi_2+\psi_1)y^1y^2,\ \ \ \ \tau_1=-2\psi_1y^2,\ \ \
 \tau_2=-2\phi_2y^1,
 \ee
 where  $\phi_i:=\phi_{x^i},\psi_i:=\psi_{x^i}$.
 Similarly, we will use $\phi_{ij}:=\phi_{x^ix^j}$ etc. in the
 following.

By (\ref{Met00011}) and the third condition in (\ref{Met1}), we
have $\psi_1\phi_2\ne0$. By (\ref{Met00011}), the first condition
in (\ref{Met1}) holds naturally.
 Further, by an integration, we easily obtain
a function $Q$ defined by
 \be\label{Met0011}
 Q:=\frac{\psi_1\ln|y^1|+\phi_2\ln|y^2|}{\phi_2+\psi_1},
 \ee
which satisfies $\tau_i/R=Q_{.i}$. The second condition in
(\ref{Met1}) gives
 $$
\phi_{12}\psi_1=\phi_2\psi_{11},\ \ \ \
\phi_2\psi_{12}=\phi_{22}\psi_1,
 $$
which gives $\phi_2=\kappa \psi_1$ for a constant $\kappa$ since
$\psi_1\ne 0$. We have $\kappa\ne -1$ since $Ric\ne 0$.

 By (\ref{Met0011}) and
$\phi_2=\kappa \psi_1$, we obtain
 $$
Q=\frac{1}{1+\kappa}\ln\big(|y^1|\cdot|y^2|^{\kappa}\big),\ \ \ \
Q_{;1}=-\frac{2}{1+\kappa}\phi,\ \ \ \
 Q_{;2}=-\frac{2\kappa}{1+\kappa}\psi,
 $$
from which and the second formula of (\ref{Met5}), we have a
function $c$ defined by
 $$
 c:=e^{\frac{4p}{1+\kappa}},
 $$
 where $p=p(x^1, x^2)$ satisfies (\ref{Mety11}). Then by (\ref{Met6}),
 we obtain the local Finsler metric (\ref{Met0y11}) which induces {\bf G}.

Conversely, it is clear that the Finsler metric $L$ given by
(\ref{Met0y11}) induces {\bf G} on $U$ under the condition
(\ref{Mety11}) with $\kappa\ne 0,-1$. \qed

\

The metrizability of the spray (\ref{TDBS1}) is also considered in
\cite{BM2}. If a two-dimensional spray {\bf G} has the local form
(\ref{TDBS1}) everywhere on a manifold $M$ with the condition
(\ref{Mety11}), then the coordinate transformation between two
neighborhoods with non-empty intersection has a special form (see
the following two lemmas).

    \begin{lem}\label{thm2.9}
        Let $\mathbf{G}$ be a two-dimensional spray on a manifold $M$ and $\kappa\ne\pm1$
        be a constant on $M$. If
         for any point $x\in M$, there is a coordinate neighborhood of $x$ in which,
         $\mathbf{G}$ has the form  (\ref{TDBS1}) with the
         condition $\phi_2=\kappa \psi_1$, then each coordinate transformation from
$(U,\{x^i\})$ to $(\bar{U},\{\bar{x}^i\})$ preserving the form
(\ref{TDBS1}) satisfies
        \be\label{coordinate}
        \bar{x}^1 = f(x^1), \ \
        \bar{x}^2 = h(x^2).
        \ee
   \end{lem}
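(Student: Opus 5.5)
The plan is to read off the shape of the Jacobian $J=(\partial\bar x^i/\partial x^j)$ from invariant curvature data, using the explicit curvature (\ref{Met00011}) computed in the proof of Lemma \ref{ProMet2}. With $\phi_2=\kappa\psi_1$ that formula gives
$$R=-2(1+\kappa)\psi_1y^1y^2,\qquad \tau_1=-2\psi_1y^2,\qquad \tau_2=-2\kappa\psi_1y^1 .$$
The same formulas hold in the barred chart, with $\bar\psi$ and the same $\kappa$. The quantities $R$ (equivalently $Ric=(n-1)R$) and the covector $\tau_k$ in (\ref{yg1}) transform tensorially under a change of coordinates: $R$ is a scalar function on the slit tangent bundle and $\tau_k$ is a spray covector. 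Throughout, $\kappa\neq-1$ and $\psi_1\neq0$ on a dense open set, as guaranteed by the standing assumption $Ric\ne0$ a.e. of Lemma \ref{ProMet2}, which I take to be in force here.

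\textbf{Step 1: $J$ preserves the null cone of $Ric$.} At a point where $\psi_1\neq0$, the Ricci curvature vanishes exactly on $\{y^1y^2=0\}$, and likewise on $\{\bar y^1\bar y^2=0\}$ in the barred chart. Since $\bar y=Jy$ is a linear isomorphism, it maps the union of the two coordinate axes onto the union of the two barred axes. Hence at each such point $J$ is either diagonal or anti-diagonal.

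\textbf{Step 2: rule out the anti-diagonal case using $\kappa\neq\pm1$.} I use the scale-invariant ratios
$$\frac{\tau_1y^1}{R}=\frac{1}{1+\kappa},\qquad \frac{\tau_2y^2}{R}=\frac{\kappa}{1+\kappa}.$$
Here $\tau_1y^1$ is the contraction of $\tau$ with the $x^1$-component of $y$, i.e.\ the part of $y$ lying along the axis line $y^2=0$. So $\tau_1y^1/R$ is intrinsically "the value attached to the first axis direction of the Ricci null cone". If $J$ were anti-diagonal, the first barred axis would correspond to the second unbarred axis. Then the invariance of $\tau_k$ and $R$ would force $1/(1+\kappa)=\kappa/(1+\kappa)$, i.e.\ $\kappa=1$, which is excluded. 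Therefore $J$ is diagonal, i.e.\ $\partial\bar x^1/\partial x^2=\partial\bar x^2/\partial x^1=0$, on the dense open set where $\psi_1\neq0$. By continuity this holds on the whole overlap, which gives (\ref{coordinate}) on each connected component. I expect the main care to be in this step: making the contraction argument precise (so that it is honestly coordinate-free) and handling the set where $\psi_1=0$.

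\textbf{Fallback if $\psi_1\equiv0$ on some open piece.} There the curvature carries no information, so I would instead argue directly from the transformation law of spray coefficients,
$$2\bar G^i=\frac{\partial\bar x^i}{\partial x^j}\,2G^j-\frac{\partial^2\bar x^i}{\partial x^j\partial x^k}y^jy^k .$$
Requiring that $\bar G^1$ contain only $(\bar y^1)^2$ and $\bar G^2$ only $(\bar y^2)^2$ gives algebraic and first-order PDE constraints on $J$. Combined with the continuity of the already-established diagonal (or, on an open piece, constant-type) form of $J$ on the neighbouring set, these constraints should still force $J$ to be diagonal.
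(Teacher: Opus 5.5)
Your proof is correct, granting the nondegeneracy you state explicitly ($\psi_1\neq0$ on a dense set, i.e.\ $Ric\neq0$ a.e.), but it takes a different route from the paper.

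\emph{The paper's route.} It works directly with the transformation law (\ref{gy1}) of the spray coefficients. It expands $2\bar G^1,2\bar G^2$ as quadratic forms in $\bar y$; the coefficients $R_1,\dots,S_3$ involve second derivatives of the transition map. It then imposes $R_2=R_3=0$ and $S_1=S_2=0$. A row of the Jacobian with two nonzero entries would force $\phi_{x^2}=-\psi_{x^1}$, which is excluded by $\kappa\neq-1$. So the Jacobian is diagonal or anti-diagonal. Finally it computes $\bar\phi,\bar\psi$ explicitly: the swap gives $\kappa\bar\phi_{\bar x^2}=\bar\psi_{\bar x^1}$, which is incompatible with $\bar\phi_{\bar x^2}=\kappa\bar\psi_{\bar x^1}$ when $\kappa\neq\pm1$.

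\emph{Your route.} You use only the tensoriality of $R$ and $\tau_k$ and pointwise linear algebra in $T_xM$. The null cone of $Ric$ forces the Jacobian to be diagonal or anti-diagonal. The ratio $\tau(y_\ell)/R(y)$ is taken with $y_\ell$ the component of $y$ along a null line $\ell$ in the splitting $T_xM=\ell_1\oplus\ell_2$. It gives coordinate-free labels $1/(1+\kappa)$ and $\kappa/(1+\kappa)$ to the two null lines, and these differ exactly when $\kappa\neq1$. That $\bar\psi_{\bar x^1}(x)\neq0$ also holds is automatic, since $R$ is a scalar.

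\emph{What each buys.} Your argument needs only the first-order data $J$ at one point, with no second derivatives of the transition map. It also isolates the roles of the hypotheses: $\kappa\neq-1$ enters only through $Ric\not\equiv0$, and $\kappa\neq1$ only to tell the two null lines apart. It makes visible the nondegeneracy the paper uses tacitly, since the paper's two contradictions need $\psi_{x^1}\neq0$ and $\bar\psi_{\bar x^1}\neq0$. In return, the paper's computation produces the explicit formulas (\ref{GG1}) for $\bar\phi,\bar\psi$, which are reused in (\ref{TDBS36}) and in Lemma \ref{Lem4}.

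\emph{The fallback paragraph.} You should drop it. Under your standing assumption that case never occurs. Without some such assumption the statement is actually false: the flat spray $\phi=\psi=0$ satisfies all hypotheses for every $\kappa$, and a rotation $\bar x=Ox$ preserves the form (\ref{TDBS1}) without being of the form (\ref{coordinate}).
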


    {\it Proof :}
    We know that the relation of the spray coefficients under two local coordinates is:
    \beq \label{gy1}
    \bar{G}^{i}=G^{r} \frac{\partial \bar{x}^{i}}{\partial x^{r}}+\frac{1}{2}
    \bar{y}^{m} \bar{y}^{r} \frac{\partial^{2} x^{k}}{\partial \bar{x}^{m}
        \partial \bar{x}^{r}} \frac{\partial \bar{x}^{i}}{\partial x^{k}}.
    \eeq

    In the following, we denote $\frac{\partial \bar{x}^{i}}{\partial x^{j}}$ by $A^i_j$, and denote
    $\frac{\partial^{2} \bar{x}^{k}}{\partial x^{m} \partial x^{r}}$ by $A^k_{mr}$.
    Then (\ref{gy1}) gives
    \beq
    &&2\bar{G}^1 =(A^1_1 A^2_2 - A^1_2 A^2_1)^{-2}\big[ R_1(\bar{y}^1)^2 + R_2\bar{y}^1\bar{y}^2
    + R_3(\bar{y}^2)^2\big],\label{G1}\\
    &&2\bar{G}^2 = (A^1_1 A^2_2 - A^1_2 A^2_1)^{-2}\big[S_1(\bar{y}^1)^2 +S_2\bar{y}^1\bar{y}^2
    + S_3(\bar{y}^2)^2\big],\label{G2}
    \eeq
    where
    \beq
    R_1 &&\hspace{-0.6cm}=
    2\phi (A^2_2)^2 A^1_1 + 2\psi (A^2_1)^2 A^1_2 - (A^2_1)^2 A^1_{22}
    + 2A^2_2 A^2_1 A^1_{12} - (A^2_2)^2 A^1_{11}, \nonumber\\
    R_2 &&\hspace{-0.6cm}=
    -2\psi A^1_1 A^2_1 A^1_2 - 2\phi A^1_2 A^2_2 A^1_1 + A^1_1 A^2_1 A^1_{22}
    - A^1_1 A^2_2 A^1_{12} -A^1_2 A^2_1 A^1_{12} + A^1_2 A^2_2 A^1_{11}, \nonumber\\
    R_3 &&\hspace{-0.6cm}=
    2(A^1_1)^2 A^1_2 \psi + 2A^1_1 (A^1_2)^2 \phi - (A^1_1)^2 A^1_{22}+ 2A^1_1 A^1_2 A^1_{12}
     - (A^1_2)^2 A^1_{11}, \nonumber\\
    S_1 &&\hspace{-0.6cm}=
    2\psi (A^2_1)^2 A^2_2 + 2\phi (A^2_2)^2 A^2_1 - (A^2_1)^2 A^2_{22}+ 2A^2_2 A^2_1 A^2_{12}
    - (A^2_2)^2 A^2_{11}, \nonumber\\
    S_2 &&\hspace{-0.6cm}=
    -2\psi A^1_1 A^2_1 A^2_2- 2\phi A^1_2 A^2_2 A^2_1 +A^1_1 A^2_1 A^2_{22}
    - A^1_1 A^2_2 A^2_{12} - A^1_2 A^2_1 A^2_{12} +A^1_2 A^2_2 A^2_{11}, \nonumber\\
    S_3 &&\hspace{-0.6cm}=
    2(A^1_1)^2 A^2_2 \psi + 2(A^1_2)^2 A^2_1 \phi - (A^1_1)^2 A^2_{22}
    + 2A^1_1 A^1_2 A^2_{12} - (A^1_2)^2 A^2_{11}. \nonumber
    \eeq

    If $\bar{G}^1=\bar{\phi}(\bar{x}^1,\bar{x}^2)(\bar{y}^1)^2$,
    we require $R_2 = 0$ and $R_3 = 0$.
    If $A^1_1 \neq 0$ and $A^1_2 \neq 0$, solving $\phi,\psi$ from $R_2 = 0$ and $R_3 = 0$
    gives
    $$
    \phi = -\frac{1}{2} \frac{\partial}{\partial x_1} \left( \frac{A^1_1}{A^1_2} \right) \cdot \frac{A^1_2}{A^1_1},
    \hspace{1cm}
    \psi = \frac{1}{2} \frac{\partial}{\partial x_2} \left( \frac{A^1_1}{A^1_2} \right) \cdot \frac{A^1_2}{A^1_1},
    $$
    which gives $\phi_{x^2}=- \psi_{x^1}$, and thus contradicts with  $\phi_2=\kappa \psi_1 \ (\kappa\ne -1)$. So
    \be\label{A112}
    A^1_1=0, \ \ \text{or} \ \ A^1_2 = 0.
    \ee
 If $\bar{G}^2=\bar{\psi}(\bar{x}^1,\bar{x}^2)(\bar{y}^2)^2$, we have $S_1 = 0$ and $S_2 = 0$.
    If $A^2_1 \neq 0$ and $A^2_2 \neq 0$, by $S_1 = 0$ and $S_2 = 0$, similarly, we   obtain
     $\phi_{x^2}=- \psi_{x^1}$,  which contradicts with  $\phi_2=\kappa \psi_1 \ (\kappa\ne -1)$ again. Therefore,
    \beq\label{A212}
    A^2_1=0, \ \ \text{or} \ \ A^2_2 = 0.
    \eeq

    Since $\det(A^i_j)\ne0$, it follows from (\ref{A112}) and
    (\ref{A212}) that one of the following two cases holds:
  \be\label{gy9}
    \bar{x}^1 = f(x^1), \ \
    \bar{x}^2 = h(x^2)\ ;\hspace{0.6cm}or \hspace{0.6cm}
    \bar{x}^1 = h(x^2), \ \
    \bar{x}^2 = f(x^1).
    \ee
    If the former case of (\ref{gy9}) holds, then by (\ref{G1}) and (\ref{G2}), in the new
    coordinate $\{\bar{x}^i\}$, we get
    \beq\label{GG1}
    \bar{G}^1=\bar{\phi}\cdot (\bar{y}^1)^2,\ \ \ \ \
    \bar{G}^2=\bar{\psi}\cdot (\bar{y}^2)^2,
    \eeq
    where
    $$
    \bar{\phi}= \frac{2\phi f'(x^1) - f''(x^1)}{2\left(f'(x^1)\right)^2}, \ \ \
    \bar{\psi}= \frac{2h'(x^2)\psi - h''(x^2)}{2\left(h'(x^2)\right)^2},
   $$
    which gives $\bar {\phi}_{\bar{x}^2}=\kappa \bar{\psi}_{\bar{x}^1}$.
     If the latter case of (\ref{gy9}) holds, we similarly have \(\kappa\bar {\phi}_{\bar{x}^2}= \bar{\psi}_{\bar{x}^1}\),
    which does not satisfy  $\phi_2=\kappa \psi_1 \ (\kappa\ne \pm1)$. Thus, we obtain  (\ref{coordinate}).
     \qed

    \begin{lem}\label{Lem4}
       In Theorem \ref{thm2.9}, if \(\kappa=1\) on $M$, then the coordinate
        transformations preserving the form (\ref{TDBS1}) are given by (\ref{gy9}).
  \end{lem}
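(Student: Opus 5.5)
The argument runs parallel to the proof of Lemma \ref{thm2.9}, reusing the transformation formulas (\ref{G1}), (\ref{G2}) and the expressions $R_1,\dots,S_3$. Let a coordinate change $(x^i)\mapsto(\bar x^i)$ carry the form (\ref{TDBS1}) to the same form. Then we must have $R_2=R_3=0$ and $S_1=S_2=0$.

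The first step is to examine $R_2=R_3=0$ under the assumption $A^1_1A^1_2\neq 0$. The computation in the proof of Lemma \ref{thm2.9} then gives $\phi_{x^2}=-\psi_{x^1}$. Together with $\phi_2=\kappa\psi_1$ and $\kappa=1$, this forces $\phi_2=\psi_1=0$. In that case $R=-2(\phi_2+\psi_1)y^1y^2=0$, which contradicts the standing assumption that $Ric\neq0$ almost everywhere. That assumption is needed for the local metrizability that produced the condition $\phi_2=\kappa\psi_1$, and it is also needed for $\psi_1\neq0$ in Lemma \ref{ProMet2}. Hence (\ref{A112}) holds. The same argument applied to $S_1=S_2=0$ gives (\ref{A212}). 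Since $\det(A^i_j)\neq0$, one of the two cases in (\ref{gy9}) must occur.

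The second step, which is the only new point compared with Lemma \ref{thm2.9}, is to check that the swapped case is now admissible. Take $\bar x^1=h(x^2)$ and $\bar x^2=f(x^1)$. Substituting into (\ref{G1}) and (\ref{G2}), or directly into (\ref{gy1}), gives
$$\bar G^1=\bar\phi(\bar y^1)^2,\qquad \bar\phi=\frac{2h'\psi-h''}{2(h')^2},\qquad \bar G^2=\bar\psi(\bar y^2)^2,\qquad \bar\psi=\frac{2f'\phi-f''}{2(f')^2}.$$
From these formulas we get $\bar\phi_{\bar x^2}=\psi_{x^1}/(h'f')$ and $\bar\psi_{\bar x^1}=\phi_{x^2}/(f'h')$. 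Therefore $\kappa\bar\phi_{\bar x^2}=\bar\psi_{\bar x^1}$, and when $\kappa=1$ this is exactly $\bar\phi_{\bar x^2}=\bar\psi_{\bar x^1}$. So the form (\ref{TDBS1}) and the condition $\phi_2=\psi_1$ are both preserved. The unswapped case is already handled by the computation (\ref{GG1}).

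Combining the two steps, the transformations that preserve the form are exactly those listed in (\ref{gy9}). The main obstacle is justifying the exclusion of the degenerate case $\phi_2=\psi_1=0$ in the first step. I would handle it by appealing to the standing hypothesis $Ric\neq0$ that underlies Lemma \ref{ProMet2}, and if necessary by working on the dense set where $Ric\neq0$ and extending by continuity.
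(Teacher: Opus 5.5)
Your proposal is correct and follows the paper's own route. The paper says only that Lemma \ref{Lem4} follows directly from the proof of Lemma \ref{thm2.9}: excluding $A^1_1A^1_2\neq0$ and $A^2_1A^2_2\neq0$ uses only $\kappa\neq-1$, and the swapped case yields $\kappa\bar\phi_{\bar x^2}=\bar\psi_{\bar x^1}$, which is compatible with the required condition precisely when $\kappa=1$. Your explicit use of $Ric\neq0$ (equivalently $\psi_1\neq0$) to turn $\phi_2=-\psi_1$ into a genuine contradiction is a nondegeneracy assumption the paper relies on only implicitly, and it is indeed needed.
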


    The above lemma  can be directly derived from the proof of  Lemma
    \ref{thm2.9}. By virtue of (\ref{Mety11}) in Lemma \ref{ProMet2}, we consider a
    spray {\bf G} with the local form (\ref{TDBS00}) on a manifold
    everywhere. We have the following theorem on the global
    metrizability for such a spray.

    \begin{thm}\label{Thm03}
       Let {\bf G} be a spray on a manifold
       $M$ with the local form (\ref{TDBS00}) on
some local coordinate neighborhood $(U,\{x^ i\})$ everywhere on
$M$ for a local function $p$ in $U$ and a consistent constant
       $\kappa\ne 0,-1$ on $M$.
        If the first de Rham cohomology group of  $M$ is trivial, then  $\mathbf{G}$
         is globally metrizable on the conical region $\mathcal{C}$ with
         $$
         \mathcal{C}(U):=\{(y^1,y^2)\in U_x,\ x\in U\ \big|\ y^1y^2\ne
         0\}.
          $$
          In this case, the Finsler metric $L$ inducing {\bf G} on $\mathcal{C}$ is a
          1-form metric in the form
 \be\label{1form}
 L=\big(\beta^2|\gamma|^{2\kappa}\big)^{\frac{1}{1+\kappa}},
 \ee
          where $\beta,\gamma$ are two independent 1-forms on $M$.
    \end{thm}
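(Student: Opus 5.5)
On each coordinate chart $(U,\{x^i\})$ in which {\bf G} has the form (\ref{TDBS00}), Lemma \ref{ProMet2} (with $\phi=p_1$, $\psi=p_2/\kappa$, so (\ref{Mety11}) holds) gives the local metric
$$L_U=e^{\frac{4p}{1+\kappa}}\big[(y^1)^2|y^2|^{2\kappa}\big]^{\frac{1}{1+\kappa}}=\big(\beta_U^2|\gamma_U|^{2\kappa}\big)^{\frac{1}{1+\kappa}}$$
on $\mathcal{C}(U)$. Here $\beta_U:=e^{a p}dx^1$ and $\gamma_U:=e^{b p}dx^2$, with constants $a,b$ chosen so that $2a+2\kappa b=4$. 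For definiteness take $a=b=2/(1+\kappa)$. The plan is to show that these local 1-forms can be rescaled by constants so that they glue to global 1-forms $\beta,\gamma$ on $M$. Then (\ref{1form}) defines a global metric which agrees locally with metrics inducing {\bf G}. This metric is defined exactly on $\mathcal{C}=\{\beta\gamma\neq0\}$, which matches $\{y^1y^2\ne0\}$ in every chart.

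First I refine to a good open covering $\{U_i\}$ of $M$ by such charts. The case $\kappa=1$ needs separate handling via Lemma \ref{Lem4}. For $\kappa\ne\pm1$, Lemma \ref{thm2.9} shows that on each nonempty $U_{ij}$ the transition has the form $\bar x^1=f(x^1)$, $\bar x^2=h(x^2)$. The transformation formula in its proof gives $\bar\phi=(2\phi f'-f'')/(2f'^2)$, which amounts to $\bar p_{\bar 1}\,d\bar x^1=p_1dx^1-\tfrac12 d\ln|f'|$. Similarly, $\bar p_{\bar 2}\,d\bar x^2=p_2dx^2-\tfrac{\kappa}{2}d\ln|h'|$. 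Since $U_{ij}$ is connected, it follows that $\bar p=p-\tfrac12\ln|f'|-\tfrac\kappa2\ln|h'|+\text{const}$. Substituting into $\beta,\gamma$ should show that $\bar\beta=\mu\,\beta$ and $\bar\gamma=\nu\,\gamma$ on $U_{ij}$ for constants $\mu,\nu\neq0$. The exponents $a,b$ may need to be adjusted, or the factors $e^{ap}$ and $e^{bp}$ split differently between $\beta$ and $\gamma$, for this cancellation to work. This is exactly the point where the choice must be made compatible with how $f'$ and $h'$ enter $\bar p$. If no split of the exponential weight makes both transition ratios constant, I will instead work directly with $L$. By the uniqueness argument (Lemma \ref{Yplem}, applied on each connected component of $\mathcal{C}(U_{ij})$), two local metrics inducing {\bf G} differ by a constant on each component. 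The explicit product form then pins down how that constant distributes between $\beta$ and $\gamma$.

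With constant transition factors $\mu_{ij}$ and $\nu_{ij}$ in hand, the cocycle conditions $\mu_{ij}\mu_{jk}=\mu_{ik}$ hold on triple overlaps, because the $\beta_i$ are nowhere-zero forms. Taking $\tau_{ij}=\ln|\mu_{ij}|$, Lemma \ref{lem0031} and $H^1_{dR}(M)=0$ give constants $\tau_i$ with $\tau_{ij}=\tau_i-\tau_j$. Rescaling $\beta_i$ by $e^{\tau_i}$ makes the moduli match, and the same procedure applies to $\gamma$. The signs of $\mu_{ij}$ form a $\mathbf{Z}_2$-cocycle, which is a potential obstruction. However, $L$ involves only $\beta^2$ and $|\gamma|$, so I will glue the line fields $\pm\beta_i$ only up to sign. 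Equivalently, I can glue $\beta_i^2$, which is a global symmetric tensor, and still get a global $L$. If a genuine 1-form $\beta$ is required, orientability of the line bundle has to be argued separately. That last point, together with verifying the constant-ratio claim in the previous step, is the main obstacle.

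Finally, I check that the glued $L$ induces {\bf G}. On each $U_i$ it is a constant multiple of $L_{U_i}$, which induces {\bf G} by Lemma \ref{ProMet2}. Constant multiples do not change the spray (\ref{Gis}), and non-degeneracy is preserved.
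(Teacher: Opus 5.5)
Your architecture is the paper's: local metrics from Lemma~\ref{ProMet2}, transitions $\bar x^1=f(x^1)$, $\bar x^2=h(x^2)$ from Lemma~\ref{thm2.9}, the formula $\bar p=p-\frac12\ln|f'|-\frac\kappa2\ln|h'|+c$, and then Lemma~\ref{lem0031}. But the step you route everything through is false. With $\beta_U=e^{ap}dx^1$ and $\gamma_U=e^{bp}dx^2$, your own formula for $\bar p$ gives on $U\cap\bar U$
\[
\frac{\bar\beta}{\beta}=e^{ac}\,\mathrm{sgn}(f')\,|f'|^{1-\frac a2}|h'|^{-\frac{a\kappa}{2}},\qquad
\frac{\bar\gamma}{\gamma}=e^{bc}\,\mathrm{sgn}(h')\,|f'|^{-\frac b2}|h'|^{1-\frac{b\kappa}{2}} .
\]
Every pair of local diffeomorphisms $f,h$ preserves the form (\ref{TDBS00}). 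So these ratios are constant for all admissible charts only if $a=2$, $a\kappa=0$ and $b=0$, $b\kappa=2$, which is impossible. No choice of exponents helps. A split that also uses the curvature, e.g.\ $\beta=e^{A}dx^1$ with $A=\frac{2p-\kappa\ln|p_{12}|}{1-\kappa}$, does have constant ratios, but it needs $p_{12}\ne0$ and $\kappa\ne1$, and neither is assumed.

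Your fallback fails too. $L_{\bar U}=\lambda L_U$ only gives $\bar\beta=\mu\beta$, $\bar\gamma=\nu\gamma$ subject to the single relation $\mu^2|\nu|^{2\kappa}=\lambda^{1+\kappa}$. For $a=b=\frac{2}{1+\kappa}$ one gets $\mu=\pm e^{ac}|f'/h'|^{\frac{\kappa}{1+\kappa}}$ and $\nu=\pm e^{ac}|h'/f'|^{\frac{1}{1+\kappa}}$, so nothing pins down constant $\mu_{ij},\nu_{ij}$. In addition, Lemma~\ref{Yplem} requires $Ric\ne0$, i.e.\ $p_{12}\ne0$, which is not a hypothesis here. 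For $\kappa=1$, the swapped transitions of Lemma~\ref{Lem4} turn $dx^1$ into a multiple of $d\bar x^2$, so the $\beta_i$ are not even comparable. Hence the constant cocycle $\{\mu_{ij}\}$ to which you apply Lemma~\ref{lem0031} does not exist.

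The repair is to separate the two claims.

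\textbf{Metrizability.} Do what the paper does: substitute $\bar p$ into (\ref{FU01}) to get $\bar L_{\bar U}=e^{\frac{4c}{1+\kappa}}L_U$. This is one positive constant on all four components of $\mathcal{C}(U\cap\bar U)$, and the same holds in the swapped case $\kappa=1$. Then apply Lemma~\ref{lem0031} to $\tau_{ij}=\ln\lambda_{ij}$; no uniqueness lemma is needed.

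\textbf{The 1-form description.} Drop the demand for constant ratios. $\{\ln|f'_{ij}|\}$ is an additive \v{C}ech cocycle of smooth functions, hence a coboundary via a partition of unity. This gives local $\beta_i=e^{A_i}dx^1_{(i)}$ with $\beta_j=\pm\beta_i$. Then $|\gamma|:=(L^{1+\kappa}/\beta^2)^{1/(2\kappa)}$ is automatically global and locally equals $e^{(2p-A)/\kappa}|y^2|$. What remains are exactly the $\mathbf{Z}_2$ issues you flagged: the signs of $f'_{ij}$ and $h'_{ij}$, and for $\kappa=1$ the ordering of the two foliations. $H^1_{dR}(M)=0$ alone does not control these (think of $\mathbf{RP}^2$). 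Here they disappear, because a surface with $H^1_{dR}(M)=0$ carrying such a pair of transverse foliations is simply connected: noncompact surfaces have free fundamental group, and $S^2$, $\mathbf{RP}^2$ carry no line fields.

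Be aware that the paper's final paragraph obtains $\beta,\gamma$ by asserting $\tilde\beta=\lambda\hat\beta$, $\tilde\gamma=\lambda\hat\gamma$ for the local forms $ce^{\frac{2p}{1+\kappa}}y^1$, $ce^{\frac{2p}{1+\kappa}}y^2$. That is exactly your choice $a=b$, so the paper does not supply this step either; your suspicion was well founded.
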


    {\it Proof:} Without loss of generality, we may assume that we have a good covering
     $\mathcal{U}=\{U_i\ |\ i\in\Lambda\}$ of $M$, such that for
     each member  $U$ of $\mathcal{U}$ with the coordinate
     $\{x^i\}$, {\bf G} has the form (\ref{TDBS00}).

     Let $U$ and $\bar{U}$ be any two members of $\mathcal{U}$ with  non-empty intersection,
     and their  coordinates are
     $\{x^i\}$ and  $\{\bar{x}^i\}$ respectively. On  $U$ and $\bar{U}$, by the assumption of (\ref{TDBS00}), the coefficients
      of {\bf G} are respectively given by
   \beq
    G^1  \hspace{-0.6cm}&&= p_1 (y^1)^2, \ \ \ G^2  = \frac{p_2 }{\kappa}(y^2)^2,\label{GGU1}\\
    \bar{G}^1 \hspace{-0.6cm}&& = \bar{p}_1 (\bar{y}^1)^2, \ \ \ \
    \bar{G}^2  = \frac{\bar{p}_2 }{\kappa}(\bar{y}^2)^2.\label{GGU2}
    \eeq
 Then by Lemma
\ref{ProMet2}, it follows from (\ref{Met0y11}) that, on
$\mathcal{C}(U)$, {\bf G} is induced by the Finsler metric
   \be\label{FU1}
 L_U=e^{\frac{4p}{1+\kappa}}\big[(y^1)^2|y^2|^{2\kappa}\big]^{\frac{1}{1+\kappa}},
 \ee
 and on $\mathcal{C}(\bar{U})$, {\bf G} is induced by the Finsler metric
 \be\label{FU01}
  \bar{L}_{\bar{U}}=e^{\frac{4\bar{p}}{1+\kappa}}\big[(\bar{y}^1)^2|\bar{y}^2|^{2\kappa}\big]^{\frac{1}{1+\kappa}}.
 \ee

   If $\kappa\ne 1$, we have (\ref{coordinate}) by Lemma \ref{thm2.9},
   and then on $U\cap\bar{U}$,  it follows from (\ref{GG1}) and  (\ref{Mety11}) that
 \be\label{TDBS36}
\bar{p}_1=\frac{p_1}{f'}-\frac{f''}{2(f')^2},\ \ \ \
\bar{p}_2=\frac{p_2}{h'}-\frac{\kappa h''}{2(h')^2},
 \ee
    where $p_i,\bar{p}_i$ are determined by (\ref{GGU1}) and
    (\ref{GGU2}). The integration of (\ref{TDBS36}) yields
 \be\label{TDBS37}
 \bar{p} = p - \frac{1}{2} \big(\ln|f'| + \kappa \ln|h'|\big)+c,
 \ee
 where $c$ is a constant on the connected neighborhood
 $U\cap\bar{U}$. Then since (\ref{TDBS37}) and (\ref{coordinate}), it
 follows from (\ref{FU1}) and (\ref{FU01}) that
  $$
 \bar{L}_{\bar{U}}=e^{\frac{4c}{1+\kappa}}L_U,
  $$
which shows that $\bar{L}_{\bar{U}}$ is a positive constant
multiple of $L_U$ on $U\cap\bar{U}$.

    If $\kappa=1$, we have (\ref{gy9}) by Lemma \ref{Lem4}. The former case of (\ref{gy9}) has been
    considered
     in the above proof to the case  $\kappa\ne 1$. We consider the latter case  of (\ref{gy9}),
     that is,  $\bar{x}^1 = h(x^2)$, $\bar{x}^2 = f(x^1)$. Similar analysis shows that
     on the connected neighborhood $\bar{U}\cap\bar{U}$, we have
    \beqn
    &&\bar{p}_1:=\frac{p_2}{h'}-\frac{h''}{2(h')^2},
   \ \ \ \ \bar{p}_2:=\frac{p_1}{f'}-\frac{f''}{2(f')^2},\\
    &&\hspace{1.cm}\bar{p} = p - \frac{1}{2} \big(\ln|h'| + \ln|f'|\big)+c,
    \eeqn
    where $c$ is a constant on the connected neighborhood
 $U\cap\bar{U}$. Therefore, it
 follows from (\ref{FU1}) and (\ref{FU01}) with $\kappa=1$ that $\bar{L}_{\bar{U}}$ is a
 positive constant multiple of $L_U$
 on $U\cap\bar{U}$.

 Thus for  any two members $U_i$ and $U_j$ of the good covering
 $\mathcal{U}$ with $U_i\cap U_j$ being non-empty,
 the local Finsler metrics $L_i$ (on $\mathcal{C}(U_i)$) is
 a positive constant multiple $\lambda_{ij} $ of $L_j$ (on
 $\mathcal{C}(U_j)$).
 Note that although the conical region $\mathcal{C}$ typically has
    four connected components,
    the above positive constant $\lambda_{ij}$ on each $ U_i\cap U_j$ are well-defined across all components.
Therefore, following the  last paragraph in the proof of Theorem
\ref{Thm01},
    we can find a Finsler metric $L$ on $\mathcal{C}$
    inducing $\mathbf{G}$. Thus {\bf G} is globally metrizable on $\mathcal{C}$.

 Finally, it is clear from the above discussion that the Finsler metric $L$
 inducing {\bf G} on $\mathcal{C}$ is a
  1-form metric given by (\ref{1form}), where, by (\ref{FU1}), the two 1-forms
  $\beta,\gamma$ on $M$ have the local expression
   $$
 \beta:=ce^{\frac{2p}{1+\kappa}}y^1,\ \ \
 \gamma:=ce^{\frac{2p}{1+\kappa}}y^2,\ \ \ (c\ is \ a\ local\
 constant).
   $$
For this fact,  we may note that for two open neighborhoods $U,V$
of $M$ with $U\cap V$ not empty, letting
 $$L_U= \big(\hat{\beta}^2|\hat{\gamma}|^{2\kappa}\big)^{\frac{1}{1+\kappa}},\
 \ \ L_{V}= \big(\widetilde{\beta}^2|\widetilde{\gamma}|^{2\kappa}\big)^{\frac{1}{1+\kappa}},
 $$
  we then  obtain $L_V=\kappa L_U$ on $U\cap V$
for some local constant $\kappa$, and thus
$\hat{\beta},\hat{\gamma}$ and
$\widetilde{\beta},\widetilde{\gamma}$ are related by
$\widetilde{\beta}=\kappa\hat{\beta},\widetilde{\gamma}=\kappa\hat{\gamma}$.
Thus using Lemma \ref{lem0031} again,  we obtain the Finsler
metric defined by (\ref{1form}) with two independent 1-forms
$\beta,\gamma$ on $M$. \qed

\

Now we further consider  some special curvature properties of the
spray {\bf G} in (\ref{TDBS00}) when $\kappa=-2$. We first prove
the following proposition.

 \begin{prop}\label{mainthm}
  Let $(M,{\bf G})$ be  a two-dimensional spray manifold, where {\bf G} is a locally projectively
   flat Berwald spray  of almost everywhere non-isotropic
    curvature on $M$. If {\bf G} is locally metrizable on a conical region $\mathcal{C}$,
    then around any point of $M$, there is a local
    coordinate neighborhood $(U,\{x^i\})$ such that
    ${\bf G}|_U$ can be expressed as
   \be\label{GGi}
  G^1 = -\frac{1}{2} \frac{\psi_1}{\psi} (y^1)^2, \quad G^2 = \frac{1}{4}
  \frac{\psi_2}{\psi} (y^2)^2,\ \ \ \big(\psi_i:=\psi_{x^i}\big),
  \ee
   where $\psi=\psi(x^1, x^2)$ satisfies
   \be\label{GGi1}
       \psi\big(\frac{\psi_2}{\psi}\big)_1 = c
        \ee
   for a scalar function $c=c(x^2)$, and $c\ne 0$ almost everywhere in  $U$.     In this case,
   ${\bf G}|_U$ is induced by the following metric
        \be\label{GGi2}
        F = \psi(x^1, x^2) \cdot \frac{(y^2)^2}{y^1}.
        \ee
    \end{prop}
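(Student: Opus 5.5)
We first bring ${\bf G}$ into a normal form and then apply the metrizability criterion. Since ${\bf G}$ is locally projectively flat, around any point there are coordinates with $G^i=Py^i$. Since ${\bf G}$ is also Berwald, $P=P_k(x)y^k$ is linear in $y$, so locally $G^i=(P_1y^1+P_2y^2)y^i$. Such a spray is projectively equivalent to a flat one, but this form does not directly give (\ref{GGi}). We therefore use a second reduction: the Berwald condition makes the coefficients quadratic, and local metrizability together with non-isotropic curvature should force the coefficient structure found in Lemma \ref{ProMet2}.

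Concretely, we first compute $R^i_{\ k}=R\delta^i_k-\tau_ky^i$ for the quadratic spray and impose the three conditions (\ref{Met1}) of Lemma \ref{Plem1}. These hold almost everywhere, because non-isotropy forces $R\ne0$ almost everywhere; otherwise $\tau_i=0$ would give isotropic curvature. We then seek a linear change of frame, that is, new coordinates, in which both $G^1$ and $G^2$ are monomials as in (\ref{TDBS1}). A natural choice is to take the coordinate lines along the asymptotic directions of $L$, i.e.\ the null directions of the degree-2 function $R$. For the form (\ref{TDBS1}), (\ref{Met00011}) gives $R\propto y^1y^2$, so $R$ factors over the reals into two 1-forms. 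Once the form (\ref{TDBS1}) is reached, Lemma \ref{ProMet2} gives $\phi_2=\kappa\psi_1$ and a potential $p$ with $p_1=\phi$ and $p_2=\kappa\psi$.

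Next we impose projective flatness through Lemma \ref{Prelem2}: ${\bf D}=0$ holds automatically for Berwald sprays, so the only condition is ${\bf W}^o=0$. Using (\ref{Met00011}) we get $R_{.1}+\tau_1=-2(\phi_2+2\psi_1)y^2$ and $R_{.2}+\tau_2=-2(2\phi_2+\psi_1)y^1$, with $\phi_2=\kappa\psi_1$. We then compute the horizontal derivatives from (\ref{Berw}) and set the antisymmetric part to zero. We expect this to force $\kappa=-2$; this is consistent with the remark preceding the proposition, and it makes $\phi_2+2\psi_1=0$, so one of the two terms of ${\bf W}^o$ disappears. The remaining term yields a PDE, which after the substitution $p=-\frac12\ln|\psi|$ (equivalently $\phi=-\frac12\psi_1/\psi$ and $-2\psi^{\rm new}=\ldots$) should become (\ref{GGi1}). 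To match (\ref{GGi}), we choose the new unknown $\psi$ with $\phi=-\frac12(\ln\psi)_1$ and $\psi^{\rm old}=\frac14(\ln\psi)_2$. Then $\phi_2=\kappa\psi^{\rm old}_1$ with $\kappa=-2$ holds identically, and ${\bf W}^o=0$ reduces to $\big(\psi(\ln\psi)_{21}\big)_1$-type relations, which integrate in $x^1$ to (\ref{GGi1}) with $c=c(x^2)$. The condition $c\ne0$ almost everywhere is then equivalent to $\psi^{\rm old}_1\ne0$, which is the non-isotropy condition.

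Finally, (\ref{Met0y11}) with $\kappa=-2$ gives $L=e^{-4p}\big[(y^1)^2|y^2|^{-4}\big]^{-1}=e^{-4p}(y^2)^4/(y^1)^2$. With $e^{-2p}=\psi$ this is $F^2$ for $F$ as in (\ref{GGi2}); alternatively one checks directly via (\ref{Gis}) that $F$ induces (\ref{GGi}).

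The main obstacle is the first reduction: showing that a locally metrizable, projectively flat Berwald spray of non-isotropic curvature admits coordinates giving the diagonal monomial form (\ref{TDBS1}). We would attempt this by writing the general quadratic form, using (\ref{Met1}) to show that $\tau_i/R$ is the $y$-gradient of $Q=a\ln|\beta|+b\ln|\gamma|$ for two 1-forms $\beta,\gamma$, and then straightening $\beta,\gamma$ into $dx^1,dx^2$ using the condition $Q_{;i}=0$, possibly after rescaling by integrating factors.
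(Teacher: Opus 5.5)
\textbf{Verdict: there is a genuine gap.} It is the step you yourself call the main obstacle, and that step carries essentially all the content of the proposition.

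\textbf{How the paper handles the reduction.} The paper does not derive it from Lemma \ref{Plem1}. It takes the local metric $F$ inducing ${\bf G}$ and notes that $F$ is a locally projectively flat Berwald metric of non-isotropic flag curvature. It then invokes Berwald's classification to get $F=\beta^2/\gamma$. Every nonvanishing 1-form in dimension two has an integrating factor, so one may choose coordinates with $\beta=p\,y^2$ and $\gamma=q\,y^1$. This gives (\ref{GGi2}) with $\psi=p^2/q$, then (\ref{GGi}) from (\ref{Gis}), and (\ref{GGi1}) from ${\bf W}^o=0$.

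\textbf{Why your sketch does not replace Berwald's theorem.} Your reason for $R$ splitting into two real linear factors (``for (\ref{TDBS1}), $R\propto y^1y^2$'') is circular. For $G^i=P_k(x)y^ky^i$ one has $R=(P_ky^k)^2-(\partial_mP_k)\,y^my^k$, a quadratic form in $y$ of a priori arbitrary type. Partial fractions give $Q=a\ln|\beta|+b\ln|\gamma|$ only when $R$ has two distinct real factors.

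If $R$ is definite or a perfect square, $Q$ instead contains $a\arctan(\gamma/\beta)$ or $a\gamma/\beta$. Then $L=ce^{2Q}$ has the form $(\beta^2+\gamma^2)e^{2a\arctan(\gamma/\beta)}$ or $\beta^2e^{2a\gamma/\beta}$, which are the other two 1-form types in Berwald's list. Such sprays cannot be put in the form (\ref{TDBS1}) in any coordinates, since there $R$ is indefinite of rank two. Nothing in your argument excludes them.

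To complete your route you would need:
\begin{itemize}
\item a separate computation showing that ${\bf W}^o=0$ together with non-isotropy is impossible in these two normal forms;
\item in the hyperbolic case, a proof that $a$ is constant before straightening $\beta,\gamma$ by integrating factors. This follows from $L_{;i}=0$, because a term $2a_{x^i}\ln|\beta/\gamma|$ cannot cancel against the rational terms.
\end{itemize}

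\textbf{The part after the reduction.} This part is correct, but replace ``we expect $\kappa=-2$'' with the actual case analysis. In the notation of (\ref{TDBS1}) with $\phi_2=\kappa\psi_1$, one gets
\[
{\bf W}^o=-2(\kappa+2)(\psi_{12}-2\psi\psi_1)\,y^2+2(2\kappa+1)(\psi_{11}-2\phi\psi_1)\,y^1 .
\]
So there are three possibilities:
\begin{itemize}
\item $\kappa=-2$ and $\psi_{11}=2\phi\psi_1$, which becomes (\ref{GGi1}) after your substitution;
\item $\kappa=-1/2$, which is the same case after swapping $x^1$ and $x^2$;
\item both brackets vanish.
\end{itemize}
In the last branch, with $p_1=\phi$ and $p_2=\kappa\psi$, one finds $\ln|p_{12}|=2p+u(x^2)=\frac{2}{\kappa}p+v(x^1)$. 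This forces $\kappa=1$ and $p_{12}=Ke^{2p}$, which is the spray of the constant-curvature pseudo-Riemannian metric $e^{2p}|y^1y^2|$. This branch is excluded only by non-isotropy, since $R_{.1}-2\tau_1=-2(\kappa-1)\psi_1y^2$.

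\textbf{A smaller point.} Your justification of $R\ne0$ is wrong as stated: $R=0$ does not by itself give $\tau_i=0$. You need metrizability: $R=KL$ with $L\ne0$, so $R=0$ forces $R^i_{\ k}=0$.
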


 {\it Proof :} By assumption, {\bf G} is locally metrizable. Then
 for an arbitrary point $P\in M$, there is a neighborhood $V$
 including $P$ such that ${\bf G}|_V$ is induced by a Finsler metric $F$.  Since {\bf G} is a two-dimensional locally
  projectively flat Berwald spray of almost everywhere non-isotropic curvature, $F$ is a two-dimensional locally
  projectively flat Berwald metric of almost everywhere non-isotropic flag curvature on $V$. Then by Berwald's result in
  \cite{Ber}, we use continuity to conclude that  $F$ is a 1-form metric
  $F=\beta^2/\gamma$ with vanishing Berwald-Weyl curvature
  $W^o=0$ (cf. Lemma \ref{Prelem2}), where $\beta,\gamma$ are two independent 1-forms. In a
  local coordinate system $(U,\{x^i\})$ ($U\subset V$), $\beta$ and $\gamma$ can be written as
   $$\beta=py^2,\ \ \ \gamma=qy^1,$$
for two scalar functions $p=p(x^1,x^2)$ and $q=q(x^1,x^2)$. Then
$F=\beta^2/\gamma$ has the local expression
 $$F=\frac{p^2(y^2)^2}{qy^1}=\psi\cdot \frac{(y^2)^2}{y^1},\ \ \ \ \ (\psi:=p^2/q),$$
which gives (\ref{GGi2}). Thus by (\ref{Gis}), a direct
calculation shows that the spray coefficients $G^1,G^2$ of $F$ are
given by (\ref{GGi}). By (\ref{GGi}) and (\ref{PreE1}), the
Riemann curvature $R^i_{\ k}$ of {\bf G} is given by
 $$
R^i_{\ k}=R\delta^i_k-\tau_iy^k,
 $$
where $R,\tau_1,\tau_2$ are defined by
 \beqn
  R:=\frac{\psi\psi_{12}-\psi_1\psi_2}{2\psi^2}y^1y^2,\ \ \
 \tau_1:=-\frac{\psi\psi_{12}-\psi_1\psi_2}{2\psi^2}y^2,
 \ \ \ \tau_2:=\frac{\psi\psi_{12}-\psi_1\psi_2}{\psi^2}y^1,
 \eeqn
 Then plugging the $R,\tau_1,\tau_2$ into (\ref{Berw}), we obtain
 $$
 {\bf
 W}^o=-\frac{3}{2\psi^3}\big[\psi(\psi\psi_{112}-\psi_2\psi_{11})-\psi_1(\psi\psi_{12}-\psi_1\psi_2)\big].
 $$
Since {\bf G} is of almost everywhere non-isotropic curvature,
$\psi\psi_{12}-\psi_1\psi_2 \ne 0$ almost everywhere on $U$. Then
where $\psi\psi_{12}-\psi_1\psi_2 \ne 0$, we derive that ${\bf
W}^o=0$ if and only if
 $$
\frac{(\psi\psi_{12}-\psi_1\psi_2)_1}{\psi\psi_{12}-\psi_1\psi_2}=\frac{\psi_1}{\psi},\
\ \ or\ \ equi.\ \ \psi\big(\frac{\psi_2}{\psi}\big)_1 = c,
 $$
where $c=c(x^2)$  and $c\ne 0$ almost everywhere. This gives
(\ref{GGi1}).  \qed

\

In Proposition \ref{mainthm}, if the conical region $\mathcal{C}$
is maximal, then by (\ref{GGi2}), we have
 $$
  \mathcal{C}(U):=\{(y^1,y^2)\in U_x,\ x\in U\ \big|\ y^1y^2\ne
  0\}.
  $$

    \begin{rem}\label{rm12}
The PDE (\ref{GGi1}) is solvable.
   Putting
    $$
    \psi(x^{1},x^{2})= e^{p(x^{1}, x^{2})},
    $$
    we can write (\ref{GGi1}) as
     $$
      p_{12}=c(x^{2})e^{-p},
      $$
which is the Liouville equation with the  general solution
    \beq \label{liup}
    p(x^{1},x^{2})=\ln \left( -\frac{c(x^{2}) (f(x^{1}) + g(x^{2}))^{2}}{2 f'(x^{1}) g'(x^{2})}
    \right),
    \eeq
    where $f(x^1)$ and $f(x^2)$ are arbitrary smooth functions
    satisfying certain condition such that the expression in $\ln$
    is positive.
    \end{rem}

By Theorem \ref{Thm03} and Proposition \ref{mainthm}, we have the
following corollary.

 \begin{cor}\label{cor0l}
  Let $(M,{\bf G})$ be  a two-dimensional spray manifold, where {\bf G} is a locally projectively
   flat Berwald spray  of almost everywhere non-isotropic
    curvature, and $M$ has trivial first de Rham cohomology group. If {\bf G} is
     locally metrizable on a conical region $\mathcal{C}=\mathcal{C}(M)$,
        then {\bf G} is globally metrizable on  $\mathcal{C}$. In this case,  {\bf G} on $\mathcal{C}$ is induced
        by a 1-form metric  $F=\beta^2/\gamma$  on $M$, where $\beta,\gamma$
      are two  independent 1-forms  on $M$.
    \end{cor}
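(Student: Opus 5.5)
The plan is to reduce Corollary \ref{cor0l} to Theorem \ref{Thm03} with $\kappa=-2$, using Proposition \ref{mainthm} to supply the local normal form.

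\emph{Step 1: the local normal form.} By Proposition \ref{mainthm}, around every point of $M$ there is a coordinate neighborhood $(U,\{x^i\})$ in which {\bf G} has the form (\ref{GGi}), with $\psi$ satisfying (\ref{GGi1}). Put $p:=-\frac12\ln|\psi|$. Then
$$p_1=-\frac12\frac{\psi_1}{\psi},\qquad \frac{p_2}{\kappa}=\frac14\frac{\psi_2}{\psi}\quad\text{for }\kappa=-2,$$
so (\ref{GGi}) is exactly the form (\ref{TDBS00}) with the consistent constant $\kappa=-2\ne 0,-1$. We should note that $\psi\ne0$ on $U$: it occurs in a denominator in (\ref{GGi}), and $F=\psi (y^2)^2/y^1$ is a Finsler metric. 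We may also shrink $U$ so that it is connected and $\psi$ has a fixed sign there.

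\emph{Step 2: the conical region.} By the remark following Proposition \ref{mainthm}, we take the maximal conical region, $\mathcal{C}(U)=\{y^1y^2\ne0\}$. This is the region in Theorem \ref{Thm03}. If $\mathcal{C}$ is smaller, we restrict the global metric obtained below to it.

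\emph{Step 3: applying Theorem \ref{Thm03}.} Since $H^1_{dR}(M)=0$, Theorem \ref{Thm03} gives a global Finsler metric of the form (\ref{1form}) inducing {\bf G} on $\mathcal{C}$. With $\kappa=-2$ we have $1/(1+\kappa)=-1$, so
$$L=\big(\beta^2|\gamma|^{-4}\big)^{-1}=\gamma^4/\beta^2 .$$
After renaming, this is the square of the 1-form metric $\tilde\beta^2/\tilde\gamma$ up to sign: take $\tilde\beta=\gamma$ and $\tilde\gamma=\pm\beta$. So $F:=\pm\sqrt{L}=\gamma^2/|\beta|$, and on each connected component of $\mathcal{C}$, where $\beta$ has a fixed sign, $F$ equals $\gamma^2/(\pm\beta)$. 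Hence $F=\beta'^2/\gamma'$ with $\beta',\gamma'$ independent global 1-forms, matching (\ref{GGi2}) locally.

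\emph{Main obstacle.} The delicate part is the consistency of the two conventions. Theorem \ref{Thm03} uses $L$ homogeneous of degree two, while the corollary uses $F$ of degree one, and sign issues arise on the four components of $\mathcal{C}$. The fix is to glue $L=F^2$ via Theorem \ref{Thm03} and then take $F=\beta^2/\gamma$, choosing the sign of the global 1-form so that it agrees with the local metrics (\ref{GGi2}) up to positive constants; Lemma \ref{lem0031} guarantees those constants can be absorbed. Since $F$ and $-F$ induce the same spray, any residual sign ambiguity is harmless.
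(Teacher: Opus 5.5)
Your proposal is correct and follows the paper's own route. You use Proposition \ref{mainthm} to get the local form (\ref{GGi}), note that it is (\ref{TDBS00}) with $p=-\frac{1}{2}\ln|\psi|$ and $\kappa=-2$, and invoke Theorem \ref{Thm03}; your explicit computation $L=\gamma^4/\beta^2$ and the relabeling $\tilde\beta=\gamma$, $\tilde\gamma=\pm\beta$ simply spell out the paper's one-line step ``by (\ref{1form}), $F=\beta^2/\gamma$''.
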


{\it Proof :} Firstly, by Proposition \ref{mainthm},
 {\bf G} has the local form (\ref{GGi}) everywhere on $M$, which is a special case of
 (\ref{TDBS00}), where we put in (\ref{TDBS00}),
  $$
 p:=-\frac{1}{2}\ln|\psi|,\ \ \ \ \kappa=-2.
  $$
Therefore, {\bf G} is globally metrizable on  $\mathcal{C}$ by
Theorem \ref{Thm03}. Let $F$ be the Finsler metric inducing {\bf
G} on $M$. By (\ref{1form}), we have $F=\beta^2/\gamma$.  \qed

\

The 1-form metric $L$ in (\ref{1form}) has constant main scalar
$I$ with $\epsilon I^2=-(\kappa-1)^2/\kappa$, where $\epsilon=\pm
1$ (\cite{Ber, AIM}). Especially, we have $I^2=9/2$ for the
metric $F$ in Corollary \ref{cor0l}.

    \section{Examples of Two-Dimensional Sprays}\label{Example}

    In Theorem \ref{Thm01}, if the
    first de Rham cohomology group of $M$ is not trivial, then the spray $\mathbf{G}$
    may not be globally metrizable, although it is locally metrizable.
    We give a family of  examples on a cylinder as follows.

 \begin{ex}\label{ex001}
 Let \( M \) be the cylinder \( M=S^1 \times \mathbf{R} \), which has non-trivial first de Rham cohomology group.
 At an arbitrary point $(x^1,x^2)\in M$, let
   \beqn
   &&\phi(x^1, x^2): = c + f(\cos x^1)sin x^1\cdot h(x^2),\\
  &&\psi(x^1, x^2): = -\frac{1}{\kappa} \widetilde{f}(\cos x^1) h'(x^2),\hspace{0.9cm}
  \big(\widetilde{f}(t):=\int f(t)dt\big),
    \eeqn
    where $f,h$ are smooth functions, and $c,\kappa$ are constant with $c\ne0$ and $\kappa\ne 0,-1$.
    Define a spray {\bf G} on $M$  by
  \beqn
    &&G^1=\phi(x^1,x^2)(y^1)^2=p_1 (y^1)^2,\ \ \ \ \ G^2=\psi(x^1,x^2)(y^2)^2=\frac{p_2}{\kappa}(y^2)^2,\ \ \
    (p_i:=p_{x^i}),\\
    &&\hspace{3cm} \big(\ p(x^1,x^2):=cx^1-\widetilde{f}(\cos x^1) h(x^2)\ \big).
  \eeqn
 Note that $p$ is a local function, but {\bf G}  is globally defined on
 \(M \).

 It can be verified that {\bf G}
   is locally metrizable but not globally metrizable on the conical region $\mathcal{C}=\mathcal{C}(M)$,
   where, on each tangent space $T_xM$ with $x=(x^1,x^2)$,
   $$
 \mathcal{C}_x:=\{(y^1,y^2)\ \big|\ (y^1,y^2)\in T_xM,\ y^1y^2\ne0\}.
   $$
 We verify this fact in the following. By Lemma \ref{ProMet2}, {\bf G} is locally
 metrizable on $\mathcal{C}$. To show that {\bf G} is not globally
 metrizable on $\mathcal{C}$, consider a  covering of the cylinder with two
 coordinate neighborhoods $(U,\{x^i\})$ and
 $(\bar{U},\{\bar{x}^i\})$, where
        \[
        U = (0, 2\pi) \times \mathbf{R}, \quad \bar{U} = (-\pi, \pi) \times \mathbf{R}.
        \]
        Their intersection \( U \cap \bar{U} \) has two connected components:
        \[
        V_- = (0, \pi) \times \mathbf{R}, \quad V_+ = (\pi, 2\pi) \times \mathbf{R}.
        \]
 It is clear that
  $$
 x^1=\bar{x}^1,\ \  x^2=\bar{x}^2\ \  on \ V_-;\hspace{1cm} x^1=\bar{x}^1+2\pi,\  \ x^2=\bar{x}^2\ \  on \
 V_+.
  $$
 On $\mathcal{C}(U)$, it follows from (\ref{Met0y11}) that
         $\mathbf{G}$ is induced by the Finsler metric
        \be\label{Met000y11}
  L_U=e^{\frac{4}{1+\kappa}p(x^1,x^2)}\big[(y^1)^2|y^2|^{2\kappa}\big]^{\frac{1}{1+\kappa}},
   \ee
  Assume that there is a Finsler metric $L$ inducing {\bf G} on $M$ (or
  $\mathcal{C}(M))$. Then by Lemma \ref{Yplem} and continuity, we
  have $L=\tau L_U$ on $\mathcal{C}(U)$ for a constant $\tau\ne 0$. Thus we may let
   $L=L_U$ on $\mathcal{C}(U)$. Further, put $L_{\bar{U}}:=L$ on
   $\bar{U}$. Then since $L_{\bar{U}}=L_U$ on $V_-\cup V_+$, by (\ref{Met000y11}) we get
   $$
 L_{\bar{U}}=
 \begin{cases}
  e^{\frac{4}{1+\kappa}p(\bar{x}^1,\bar{x}^2)}\big[(\bar{y}^1)^2|\bar{y}^2|^{2\kappa}\big]^{\frac{1}{1+\kappa}},
  \hspace{1.4cm} when \ 0<\bar{x}^1<\pi, \\
e^{2\pi c}\cdot
e^{\frac{4}{1+\kappa}p(\bar{x}^1,\bar{x}^2)}\big[(\bar{y}^1)^2|\bar{y}^2|^{2\kappa}\big]^{\frac{1}{1+\kappa}},
\ \ \ \  when \ -\pi<\bar{x}^1<0,
\end{cases}
$$
It is clear that $L_{\bar{U}}$ is not continuous where
$\bar{x}^1=0$, which gives a contradiction.
    \end{ex}

By (\ref{GGi2}) and Remark \ref{rm12}, we can construct many
1-form metrics $F=\beta^2/\gamma$ globally defined on a manifold
$M$ whose first de Rham cohomology  group is non-trivial, and
meanwhile, on such a manifold $M$, this also provides some sprays
  which are  locally projectively flat, Berwaldian, of
 almost everywhere non-isotropic curvature on $M$ and globally metrizable (cf. Corollary \ref{cor0l}).

    \begin{ex}\label{exl}
        Let \( M \) be the cylinder \( S^1 \times \mathbf{R} \) with \( \theta \)
        the coordinate on the circle \( S^1 \), and \( z \) the coordinate along the real line \( \mathbf{R} \).
 By (\ref{GGi2}) and (\ref{liup}), we choose suitable functions
 $f(z),g(\theta)$ and $c(\theta)$ to define the Finsler metric
 $F$ in (\ref{GGi2}). For example, let
    $$
 f(z) = e^{z} + 1, \quad g(\theta) =\kappa \sin\theta, \quad c(\theta) =
 \cos\theta,\ \ (g'(\theta)=\kappa c(\theta)),
  $$
  where $\kappa$ is a constant with $\kappa\ne 0$ and $|\kappa|<1$.
  Then we obtain
  $$
 F= -\frac{(\kappa\sin\theta + e^{z} + 1)^2}{2\kappa e^z}
 \frac{(y^2)^2}{y^1}.
  $$
 $F$ is  globally defined  on \( M
 \), and by Proposition \ref{mainthm} and Remark \ref{rm12},   $F$ is a locally projectively
   flat Berwald metric  of almost everywhere non-isotropic
  flag  curvature on $M$.
 \end{ex}

As a complement to Corollary \ref{cor0l}, we give a family of
locally projectively flat Berwald sprays on $S^2$ which are
   of non-isotropic curvature almost everywhere, and they are not
   globally metrizable on $S^2$ and even    not locally metrizable
  anywhere on $S^2$.

    \begin{ex}
 Let $\alpha$ be the standard  Riemannian metric of constant sectional
 curvature $1$ on $S^2$, and $P$ be a 1-form on $S^2$ which is non-closed (almost everywhere).
 Define a spray {\bf G} on $S^2$ by
  $$
  G^i = G_\alpha^i + P y^i,
  $$
  where ${\bf G}_{\alpha}$ denotes the spray induced by $\alpha$.
  It is clear that {\bf G} is of scalar curvature satisfying
  (\ref{yg1}). Since $P$ and $R_{.i}-2\tau_i$
($i=1,2)$ are 1-forms on $S^2$, they
   must vanish at some point of $S^2$.
  Since $P$ is a non-Hamel function (almost everywhere) on $S^2$, it follows from
Proposition 4.4 in \cite{Yang3}  that
   {\bf G} is of non-isotropic
  curvature on $S^2$ almost everywhere.  We see that {\bf G} cannot be
  globally metrizable on $S^2$ by Corollary \ref{cor0l}  (no independent 1-forms
  $\beta,\gamma$ which are nowhere vanishing on $S^2$).

 We give a concrete construction of the 1-form $P$ as above.
 The basic idea is to start from a nowhere closed 1-form $\omega=\omega_1du^2+\omega_2du^2+\omega_3du^2$ on ${\bf R}^3$,
  let $P$ be the restriction of $\omega$ on $S^2$.
   For example, let
    $$
     \omega=-u^2 du^1 + u^1 du^2\ \ on\ \ \mathbf{R}^3.
     $$
   $P$ is actually not closed anywhere on $S^2$. Further,
it can be shown that on the semi-sphere
$u^3=\pm\sqrt{1-(u^1)^2-(u^2)^2}\ne0$, the 1-forms
$R_{.i}-2\tau_i$ ($i=1,2)$ do not vanish anywhere. Now we give the
details of computation on the semi-sphere
$u^2=\sqrt{1-(u^1)^2-(u^2)^3}>0$.  Take the center projection
coordinate $(x^1,x^2)$ determined by
 $$
 u^1=\frac{x^1}{T},\ \ \
 u^3=\frac{x^2}{T},\ \ \
 u^2=\frac{1}{T},\ \ \ \big(T:=\sqrt{1+(x^1)^2+(x^2)^2}\ \big).
 $$
 Then the local expression of $P$ is given by
  $$
 P=P(x,y)=-\frac{y^1}{1+(x^1)^2+(x^2)^2}.
  $$
Thus, under the coordinate $(x^1,x^2)$, we have
 $$
 G^i=Qy^i,\ \ \ Q:=\frac{-(1+x^1)y^1-x^2y^2}{1+(x^1)^2+(x^2)^2}.
 $$
 Put $R^i_{\ k}=R\delta^i_k-\tau_ky^i$ for the Riemann curvature of
 {\bf G}. Then a direct computation gives
  $$
 R_{.1}-2\tau_1=-\frac{6x^2y^2}{\big[1+(x^1)^2+(x^2)^2\big]^2},\
 \  \ R_{.2}-2\tau_2=\frac{6x^2y^1}{\big[1+(x^1)^2+(x^2)^2\big]^2},
  $$
 So $R_{.i}-2\tau_i=0$ when $x^2=0$, from which, we can conclude that
 $R_{.i}-2\tau_i=0$ only on the great circle $u^3=0,(u^1)^2+(u^2)^2=1$.
  Thus {\bf G} is non-isotropic almost
  everywhere on $S^2$. Further, we can show that $(\tau_i/R)_{;j}\ne 0$ almost
  everywhere on $S^2$. So {\bf G} is actually  not locally metrizable
  anywhere on $S^2$ by Lemma \ref{Plem1}.
    \end{ex}

\begin{ex}
 In Proposition \ref{mainthm}, if we write {\bf G} in the form
 $G^i=Py^i$ under the classical locally projectively flat coordinate system, we can
 use Lemma \ref{Plem1} to give some solutions of $P$ such that
 {\bf G} is metrizable. For example, let
   $$
  G^i = Py^i,\ \ \ \
   P = \frac{\big(\sqrt{(x^{1})^{2}-2x^{2}}\, - x^{1}\big)y^{1} + 2y^{2}}
   {\sqrt{(x^{1})^{2}-2x^{2}}\cdot\bigl(\sqrt{(x^{1})^{2}-2x^{2}}-x^{1}\bigr)},
   $$
 and {\bf G} is induced by the metric $F=\beta^2/\gamma$ with
  \beqn
   &&\beta:=\bigl(\sqrt{(x^{1})^{2}-2x^{2}}-x^{1}\bigr)^{-4}\big[(-\sqrt{(x^{1})^{2}-2x^{2}}+x^1)y^{1}-y^{2}\big],\\
   &&\gamma:=\bigl(\sqrt{(x^{1})^{2}-2x^{2}}-x^{1}\bigr)^{-4}
    (-x^{1}y^{2}+x^{2}y^{1}).
   \eeqn
 The metric $F$ must be  of course a 1-form metric, which coincides with
 the metric type in Proposition \ref{mainthm}.
    \end{ex}

{\bf Conflict of Interest}: The authors declare no conflict of
interest. The manuscript has no associated data.

    \vspace{0.6cm}

    \noindent Xingzhi Duanmu   and Guojun Yang\\
    Department of Mathematics \\
    Sichuan University \\
    Chengdu 610064, P. R. China \\
    duanmuxingzhi@163.com\\
    yangguojun@scu.edu.cn,
\end{document}